\newtheorem{thm}{Theorem}[section]
\newtheorem{cor}[thm]{Corollary}
\newtheorem{lem}[thm]{Lemma}
\newtheorem{prop}[thm]{Proposition}
\theoremstyle{definition}
\newtheorem{ex}[thm]{Example}
\newtheorem{prob}{Problem}
\theoremstyle{remark}
\newtheorem{rms}[thm]{Remark}
\newcommand{\RR}{\mathbb{R}}
\newcommand{\CC}{\mathbb{C}}
\newcommand{\HH}{\mathbb{H}}
\newcommand{\OO}{\mathbb{O}}
\newcommand{\R}{\mathbb{R}}
\newcommand{\Q}{\mathbb{Q}}
\newcommand{\Z}{\mathbb{Z}}
\newcommand{\I}{\mathbb{I}}
\newcommand{\bH}{\mathbf{H}}
\newcommand{\F}{\mathcal{F}}
\newcommand{\G}{\Gamma}
\newcommand{\wt}{\widetilde}
\newcommand{\nb}{{\mathcal Op}}
\DeclareMathOperator{\id}{id}
\DeclareMathOperator{\Ad}{Ad}
\DeclareMathOperator{\Lie}{Lie}
\DeclareMathOperator{\PSL}{PSL}
\DeclareMathOperator{\SO}{SO}
\DeclareMathOperator{\Sr}{S}
\DeclareMathOperator{\Or}{O}
\DeclareMathOperator{\PU}{PU}
\DeclareMathOperator{\PSO}{PSO}
\DeclareMathOperator{\GA}{GA}
\DeclareMathOperator{\Aut}{Aut}
\DeclareMathOperator{\Isom}{Isom}
\DeclareMathOperator{\hol}{Hol}
\DeclareMathOperator{\dev}{Dev}
\DeclareMathOperator{\codim}{codim}
\DeclareMathOperator{\per}{per}
\def\mun{{^{-1}}}
\DeclareMathOperator{\Diff}{Diff}
\numberwithin{equation}{section}
\newtheorem*{ack}{Acknowledgment}
\definecolor{darkgreen}{cmyk}{1,0,1,.2}
\definecolor{darkorchid}{rgb}{0.6, 0.2, 0.8}
\definecolor{persimmon}{rgb}{0.5, 0.2, 0.5}
\newdimen\theight
\def\TeXref#1{%
             \leavevmode\vadjust{\setbox0=\hbox{{\tt
                     \quad\quad  {\small \textrm #1}}}%
             \theight=\ht0
             \advance\theight by \lineskip
             \kern -\theight \vbox to
             \theight{\rightline{\rlap{\box0}}%
             \vss}%
             }}%
\begin{document}

\title[Rigidity of Lie foliations]{Rigidity of Lie foliations with locally symmetric leaves}
\author{Ga\"{e}l Meigniez and Hiraku Nozawa}

\address{Ga\"{e}l Meigniez, I2M (UMR CNRS 7373) ---
Aix-Marseille Universit\'{e},
I2M (UMR 7373 CNRS),
 3 place Victor-Hugo, Case 19, 13331 Marseille Cedex 3,
France
}
\email{gael.meigniez@univ-amu.fr}

\address{Hiraku Nozawa, Department of Mathematical Sciences, College of Science and Engineering, Ritsumeikan University, 1-1-1 Nojihigashi, Kusatsu, Shiga, 525-8577, Japan}
\email{hnozawa@fc.ritsumei.ac.jp}

\keywords{Foliation, Lie foliation, Riemannian foliation, lattice, locally symmetric space, Mostow rigidity, barycentre mapping, arithmetic subgroup
}
\subjclass[2020]{57R30,53C24,37C85,53C12}

\begin{abstract}
We prove that if the leaves of a minimal Lie foliation are locally isometric to a symmetric space of non-compact type without a Poincar\'e disk factor, then the foliation is smoothly conjugate to a homogeneous Lie foliation up to finite covering. {This result generalizes and strengthens Zimmer’s theorem, which characterizes minimal Lie foliations with leaves isometric to a symmetric space of non-compact type without real rank one factors as pullbacks of homogeneous foliations. As applications,} we extend Zimmer’s arithmeticity theorem for holonomy groups and establish a rigidity theorem for Riemannian foliations with locally symmetric leaves.
\end{abstract}

\maketitle

\section{Introduction}\label{intro_sec}

A \emph{Lie} foliation is a foliation transversely modelled on a
 Lie group, called \emph{structural}.
  The interest towards Lie foliations mainly
   comes from P. Molino's theory, \cite{Molino}
   after which every Riemannian foliation is in
   some sense a quotient of a
   parametric family of Lie foliations.
     Yet, little is known about the classification of Lie foliations themselves.
      The first examples are the so-called \emph{homogeneous} Lie foliations,
       which are foliations on locally homogeneous spaces by locally homogeneous leaves.
        The main questions deal with to what extent these examples are representative of the whole: see Ghys \cite{Ghys}.
         Examples abound of Lie foliations whose structural group
         is solvable and that cannot be obtained as 
 {pullbacks} of homogeneous ones
 \cite{Meigniez1,Meigniez2}.

A breakthrough was made by R.\ Zimmer \cite{Zimmer}. He brought methods of rigidity theory to study Lie foliations with symmetric leaves of non-compact type whose factors are of real rank at least two. By means of generalising Margulis' superrigidity theorem to cocycles,
Zimmer showed that the holonomy groups of such foliations are arithmetic.
As  pointed out by E.\ {Ghys}, Zimmer actually showed that such Lie foliations are necessarily {pullbacks} of homogeneous ones.

The present paper more generally studies the Lie foliations whose leaves are \emph{locally} isometric to a symmetric space $X$ of non-compact type \emph{without Poincar\'e disk factor.}
 We put no restriction on the real rank of the factors, nor on the fundamental group of the leaves.
We apply the fundamental methods
  and results of modern rigidity theory. Our main result is that these Lie foliations are \emph{smoothly conjugate} to homogeneous ones {up to finite coverings (Theorem \ref{main1_thm}). As consequences,} we generalise Zimmer's arithmeticity theorem {(Corollary \ref{z_cor})}. Also, with the help of Molino's structure theorem, we get a rigidity property for Riemannian foliations with locally symmetric leaves {(Theorem \ref{uni_thm})}. In Section \ref{prob_sec}, we propose several fundamental problems on Lie foliations that arise from
  our techniques and results.

\begin{ack}
The second author is partially supported by JSPS KAKENHI Grant Numbers 26800047, 17K14195, 20K03620, 24K06723 and the Spanish MICINN grant MTM2011-25656. This work was partially
carried during a fellowship of the second author at Institut des Hautes \'{E}tudes Scientifiques (Bures-sur-Yvette, France), supported by EPDI/\-JSPS/\-IH\'{E}S;
and during two visits of the first author at Ritsumeikan University (Kyoto, Japan), supported by grants of this University.
 We thank these institutions.
\end{ack}

\section{Main result and corollaries}\label{results_sec}

Let us first recall some classical notions, vocabulary
 and elementary
 facts, and give
some precisions.

 Let $G$ be a connected {real} Lie group. A 
   $G$-\emph{Lie foliation} $\F$ on a manifold $M$
 is defined by a maximal foliated atlas of
  local coordinate charts
 $$(\phi_i:U_i\to V_i\times W_i)_{i\in I},$$
 where $U_i$, $V_i$, $W_i$ are open subsets in $M$, $\R^p$,
 $G$, respectively; the coordinate changes being
 smooth ($C^\infty$) diffeomorphisms of the form
 $$\phi_j\circ\phi_i\mun:(x,g)\mapsto(f_{i,j}(x,g),\gamma_{i,j}g)$$
 with $\gamma_{i,j}\in G$.
 One calls $G$ the \emph{structural group}. Clearly, one
 can assume $G$ to be simply connected; however it is more
  practical
 for our sake not to make this assumption.
 
For any manifold $N$ and any smooth map $g:N\to M$ transverse to $\F$,
  the pullback $g^*(\F)$
   of $\F$ is also a $G$-Lie foliation on $N$. We call such a pullback \emph{faithful} if moreover the preimage of
   every leaf of $\F$ is connected, hence a single leaf of
   $g^*(\F)$.
   
 Such a foliation becomes a fibre bundle
 over $G$ when lifted to the universal
   cover of the ambient manifold; precisely, E. Fedida
   proved the following structure theorem.
 \begin{prop}[\cite{Fedida}] Let $M$ be a closed connected manifold; denote by $\widetilde{M}$
 its universal cover. A foliation $\F$ on 
  $M$ is a $G$-\emph{Lie foliation} if and only if
   one has
\begin{itemize}
\item A surjective bundle map
 $\dev : \wt{M} \to G$ whose fibres cover the leaves of $\F$;
\item A group homomorphism $\hol : \pi_{1} M\to G$ such that for every $\gamma\in\pi_1M$ and $x\in\widetilde M$~:\[\dev(\gamma x)=\hol(\gamma) \dev(x).\]
\end{itemize} 
\end{prop}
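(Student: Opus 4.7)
My plan is to prove the two directions of the equivalence by rather different means: the ``if'' direction by descending foliated charts from $\wt M$ to $M$, and the ``only if'' direction by lifting the local transverse projections to $\wt M$ and analytically continuing them along paths.

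For the ``if'' direction, assume we are given $\dev$ and $\hol$. I would cover $G$ by open sets over which $\dev$ admits local product structures, pull these back to charts $\wt U\subset \wt M$ of the form $\psi(x)=(\text{leaf coordinate},\dev(x))$, and take the $\wt U$ small enough to inject under the covering $\wt M\to M$, thereby producing a foliated atlas on $M$. Two such charts coming from $\wt U_1$ and $\wt U_2\subset \wt M$ cover the same point of $M$ precisely when $\wt U_2=\gamma\wt U_1$ for some $\gamma\in\pi_1 M$; the equivariance $\dev(\gamma x)=\hol(\gamma)\dev(x)$ then forces the transition to have the prescribed form $(x,g)\mapsto(f(x,g),\hol(\gamma)g)$.

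For the converse, I would start from a foliated atlas $(\phi_i:U_i\to V_i\times W_i)_{i\in I}$ on $M$ and extract from each $\phi_i$ the local submersion $p_i:U_i\to G$ whose fibres are the plaques. The hypothesis on the transition maps yields $p_j=L_{\gamma_{j,i}}\circ p_i$ on each connected component of $U_i\cap U_j$, where $L_g$ denotes left translation by $g$. Pulling the atlas back to $\wt M$, I would fix a base chart $U_{i_0}$ and analytically continue the germ of $p_{i_0}$ along every path, left-multiplying by the appropriate $\gamma_{j,i}$ at each chart change. Simple connectedness of $\wt M$ guarantees path-independence, producing a globally defined smooth submersion $\dev:\wt M\to G$ that agrees with each $p_i$ up to a fixed left translation. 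For every deck transformation $\gamma\in\pi_1 M$ the map $\dev\circ\gamma$ is another such analytic continuation, hence equals $L_{\hol(\gamma)}\circ\dev$ for a unique element $\hol(\gamma)\in G$, and one reads off immediately that $\hol:\pi_1 M\to G$ is a homomorphism.

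The remaining point---where closedness of $M$ enters decisively---is to promote $\dev$ to a \emph{surjective} bundle map onto $G$. I would apply a Lebesgue-number argument to a finite subcover of $M$ by foliated charts to produce a uniform radius $\varepsilon>0$ in $G$ such that every curve of diameter less than $\varepsilon$ starting from a point of $\dev(\wt M)$ lifts to a transverse curve in $\wt M$; this Ehresmann-type completeness of the transverse structure, specific to Lie foliations on compact manifolds, will force $\dev$ to be a locally trivial fibration. The image of $\dev$ is then open, closed, and non-empty in $G$, hence equal to $G$, and its fibres cover the leaves of $\F$. Setting up this path-lifting argument rigorously and deducing local triviality from it is, in my view, the main technical obstacle; the rest is essentially formal.
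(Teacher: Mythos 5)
The paper does not prove this proposition: it is stated with the citation \cite{Fedida} and treated as known. Your argument reconstructs the standard Fedida--Molino proof, and it is essentially correct. A few small points are worth tightening. In the ``if'' direction, the condition for two descended charts to overlap on $M$ should be $\gamma\,\wt U_1\cap\wt U_2\neq\emptyset$ for some $\gamma$, not $\wt U_2=\gamma\,\wt U_1$; the transition then decomposes into an internal overlap (where the $G$-components agree, both being $\dev$) and a deck piece (where the equivariance supplies $L_{\hol(\gamma)}$), so the $G$-component of every transition is still a left translation. Also, the pulled-back local product charts must be further refined by charts on the fibre of $\dev$ in order to land in $\R^p\times G$. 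In the ``only if'' direction, ``analytic continuation'' is the wrong phrase --- these are $C^\infty$ submersions --- but the underlying mechanism you describe (continuation of a germ along paths with transition cocycle in $G$ acting by left translations, trivial monodromy because $\pi_1\wt M=1$) is precisely what is needed, and the homomorphism property of $\hol$ falls out exactly as you say. Your completeness argument for surjectivity and local triviality is the right one and correctly identifies where compactness of $M$ is essential: a bundle-like metric (or your Lebesgue-number variant) yields Ehresmann completeness of the transverse distribution, so $\dev$ is a locally trivial fibre bundle whose image is open and closed, hence all of the connected group $G$. None of the imprecisions noted above is a genuine gap.
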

One calls $\dev$ the \emph{developing map;} 
$\hol$ the \emph{holonomy representation;}
and the image subgroup $\hol(\pi_1M)\subset G$,
 the \emph{holonomy group} of $\F$.

Clearly, for every $g\in G$ one has~:
$$\pi_0(\dev\mun(g))\cong\pi_1G.$$
Every connected component of the fibre $\dev\mun(g)$
 is the universal cover of a leaf of $\F$ (since $\pi_2G=0$).

The codimension of the foliation
 $\F$ is the dimension of the structural group.
 The pair $(G,Hol(\pi_1M))$ of the structural group
 together with the holonomy subgroup characterizes the holonomy pseudogroup of $\F$ up to local isomorphism in Haefliger's sense \cite{Haefliger}
  (also known as Haefliger equivalence);
 one can call this pair the \emph{transverse structure} of the Lie
 foliation $\F$. The transverse structure is invariant under every
 faithful pullback.
 
   Note that $\F$ is minimal (in the
  sense that every leaf is dense in $M$) 
  if and only if the holonomy group $\hol(\pi_1M)$ is dense in $G$. We will usually assume that $\F$ is minimal.
   For Lie foliations,
 this restriction is not a serious one: indeed, in
 the general case, the closure of any leaf is a
 compact submanifold in which the foliation is a minimal Lie foliation (see \cite[Proposition 4.2, p.\ 116]{Molino}).
 
The leaves of $\F$ have no holonomy in Reeb's sense.
For any Riemannian metric on $M$, the leaves are two by two bi-Lipschitz diffeomorphic; so, there is an intrinsic quasi-isometry type, in Gromov's sense, for the universal covers of the leaves of $\F$.

\begin{ex}[Homogeneous Lie foliations]\label{hom_ex} 
Consider a short exact sequence of  Lie groups
$$1\to H\to L\to G\to 1$$
($H$ and $L$ being not necessarily connected) together with
a uniform (cocompact) lattice $\Gamma\subset L$ and a compact subgroup $K\subset H$, such that $L/K$ is connected and that for every $\ell\in L$~:
 \begin{equation}\label{eq_mfd}
 \Gamma\cap \ell K\ell\mun=\{1\}
 \end{equation}
  (for example, $\Gamma \cap H$ is torsion-free).

  Then,
 the left action of $\G$ on $L/K$ is free and properly discontinuous.
 Consider the symmetric space $X:=H/K$.
 The product foliation on $L/K$
  parallel to  $X$ being $\Gamma$-left invariant,
 one gets, on the locally homogeneous space $M:=\Gamma\backslash L/K$,
  a $G$-Lie foliation $\F$, called \emph{homogeneous}.
  We call $L$ the
  \emph{total group.} Actually,
  in this paper we shall only meet
   the special case where  $L$ is the direct product $H\times G$
   (\emph{direct product total group}).

   Clearly:
  \begin{itemize} 
   \item The leaves of $\F$ are locally isometric to $X$;
\item The holonomy group of $\F$
 is the projection of $\Gamma$ in $G$;
 \item If $X$ is 
 connected and simply connected,
  then the fundamental group of the leaves is $\Gamma\cap H$;
  \item If there exists any
   homogeneous $G$-Lie foliation with direct product total group, then $G$ must be unimodular.
  \end{itemize}
\end{ex}

One denotes by $\bH^d_{\RR}$ (resp.\  $\bH^d_{\CC}$) the real (resp.\ complex)
hyperbolic space of real (resp.\ complex) dimension $d$. A diffeomorphism between two
Riemannian manifolds is called a \emph{homothety} if the pullback of the target metric
is a constant times the source metric. Our main result is the following. Recall that $G$ denotes an arbitrary connected real Lie group.

\begin{thm} \label{main1_thm}
Let $X$ be a product of nonflat irreducible Riemannian symmetric spaces of non-compact types,
without factor homothetic to $\bH^2_{\RR}$.

 Let $(M,\F)$ be a connected closed manifold with a minimal $G$-Lie foliation,
bearing a $C^{0}$ Riemannian metric
  whose restriction to every leaf is smooth and locally isometric to $X$.

Then,  $(M,\F)$ {is finitely covered} by
a homogeneous $G$-Lie foliation whose total group is $\Isom(X)\times G$.
\end{thm}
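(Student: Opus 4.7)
The plan is to construct, after passing to a finite cover of $M$, a $\pi_1 M$-equivariant smooth diffeomorphism $\widetilde M\to X\times G$ intertwining some homomorphism $\rho\colon\pi_1 M\to\Isom(X)\times G$ whose image is a uniform lattice. This realizes $(M,\F)$ as finitely covered by a homogeneous $G$-Lie foliation with total group $\Isom(X)\times G$.

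\emph{Setup.} By Fedida's theorem, there are a developing map $\dev\colon\widetilde M\to G$ and a holonomy $\hol\colon\pi_1 M\to G$. Since $M$ is compact and the leafwise metric is smooth and locally isometric to the simply connected nonpositively curved space $X$, each leaf is metrically complete, and the universal cover $\widetilde L$ of every leaf $L$ is canonically isometric to $X$; the connected components of the fibres of $\dev$ are exactly these universal covers. Using the $C^0$ transverse Riemannian structure to transport a chosen basepoint and frame across leaves, one obtains a measurable map $D\colon\widetilde M\to X$ whose restriction to each $\widetilde L$ is an isometry, together with a measurable cocycle $\sigma\colon\pi_1 M\times\widetilde M\to\Isom(X)$ defined by $D(\gamma\widetilde x)=\sigma(\gamma,\widetilde x)\,D(\widetilde x)$. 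The pair $(D,\dev)$ is thus a measurable leafwise-isometric parametrization of $\widetilde M$ by $X\times G$.

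\emph{Rigidity, the heart of the argument.} The crux is to show that $\sigma$ is cohomologous to an honest homomorphism $\rho_X\colon\pi_1 M\to\Isom(X)$. For factors of $X$ of real rank at least two, this is Zimmer's cocycle superrigidity theorem, as already exploited in \cite{Zimmer}. For rank-one factors --- $\bH^d_\CC$ with $d\ge 2$, $\bH^d_\HH$ and $\bH^2_\OO$ --- we employ the natural-map construction of Besson, Courtois and Gallot: the leafwise isometries produce a $\pi_1 M$-equivariant measurable family of probability measures on the visual boundary of $X$, whose canonical smooth barycentre exists and is unique. This yields both the required straightening and an upgrade of the resulting equivariant map to smooth regularity. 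The exclusion of a $\bH^2_\RR$ factor is indispensable precisely at the barycentre uniqueness step, since $\bH^2_\RR$ is the one irreducible noncompact symmetric space for which Mostow-type rigidity fails, by Teichm\"uller theory.

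\emph{Conclusion.} The homomorphism $\rho:=(\rho_X,\hol)\colon\pi_1 M\to\Isom(X)\times G$, combined with the corrected $D$, gives a $\pi_1 M$-equivariant smooth diffeomorphism $\widetilde M\to X\times G$ conjugating $\F$ to the horizontal foliation. Discreteness and cocompactness of the $\pi_1 M$-action, together with the freeness condition --- ensured after passing to a finite cover of $M$ to kill torsion and satisfy \eqref{eq_mfd} --- produce a uniform lattice $\G\subset\Isom(X)\times G$ that exhibits $(M,\F)$ as a finite quotient of the corresponding homogeneous model. The principal obstacle is the rigidity step: one must unify cocycle superrigidity (higher rank) and the barycentre method (rank one) into a single statement applying to mixed-rank $X$, while promoting the merely measurable data coming from the $C^0$ transverse structure all the way to a smooth conjugacy.
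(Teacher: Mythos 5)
Your outline names the right ingredients (BCG barycentre for rank one, higher-rank rigidity, a final smoothing step), but the core of the argument is missing, and as written the rigidity step does not go through.

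The pivotal claim in your proposal is that the measurable cocycle $\sigma\colon\pi_1 M\times\widetilde M\to\Isom(X)$ is cohomologous to a homomorphism, and that this follows from ``Zimmer cocycle superrigidity for higher-rank factors and BCG barycentres for rank-one factors.'' Neither tool applies in this form. Zimmer's cocycle superrigidity needs an ergodic, finite-invariant-measure action and a cocycle into a higher-rank simple group with algebraic hull conditions; the $\pi_1 M$-action on $\widetilde M$ is free with infinite invariant measure (and the existence of an invariant measure presupposes unimodularity of $G$, which is itself part of what must be proved). The BCG barycentre method, on its own, straightens a quasi-isometry to an isometry, not a measurable cocycle to a homomorphism. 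What is actually needed --- and what the paper supplies --- is an intermediate geometric object: a right \emph{quasi-action} $\Phi(g)$ of $G$ on $\widetilde M$ built from horizontal lifts of geodesics of the bundle-like metric, whose induced maps on ideal boundaries $\partial\dev^{-1}(h)$ assemble into a genuine right $G$-action on $\partial\widetilde\F$ commuting with $\pi_1 M$ (Lemmas~\ref{dev_pts}--\ref{action_lem}). Only then do barycentres of pushed-forward visual measures (rank one) or Kleiner--Leeb/Pansu rigidity (rank $\ge 2$) upgrade this to a genuine fibrewise-isometric right $G$-action. Notably, the paper never invokes Zimmer cocycle superrigidity at all; it replaces it with Kleiner--Leeb quasi-isometric rigidity, which is precisely what lets it drop Zimmer's rank-$\ge 2$ and simply-connected-leaf hypotheses.

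Several further points your proposal leaves unresolved. (1) In the rank-one case, showing that the barycentre map is an isometry (rather than merely volume-nonincreasing) requires the unimodularity and disintegration argument of Lemma~\ref{isometry_lem}; your text simply asserts ``straightening.'' The exclusion of $\bH^2_\RR$ enters concretely because the Jacobian estimate of \cite{BCG} requires $n\ge 3$ --- not because ``Mostow rigidity fails by Teichm\"uller theory,'' which is a heuristic rather than where the proof uses the hypothesis. (2) In the reducible case, the finite cover is \emph{not} taken to kill torsion (freeness is automatic since $\pi_1 M$ acts freely on $\widetilde M$); it is taken to kill the permutation representation $\alpha\colon\pi_1 M\to S_X$ of $\pi_1 M$ on the de Rham factors of $X$, so that each factor $\widetilde M_i$ carries its own action --- this step (Section~\ref{gen_sec}) is entirely absent from your proposal. (3) The upgrade from the resulting $C^{\infty,0}$ conjugacy to a genuine smooth conjugacy is nontrivial: it is a convolution-and-general-position smoothing argument (Lemma~\ref{smooth_lem}), not an automatic consequence of barycentre regularity as your text suggests.
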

{ Theorem \ref{main1_thm}
will be proved in Sections \ref{hyp_sec} to \ref{gen_sec},
case by case.}
\begin{rms}\label{no_cover_rms}
If moreover $X$ is irreducible, or more generally if $X$ does not
admit
 two homothetic irreducible factors,
then there is no use to pass to a finite cover:
$(M,\F)$ is smoothly conjugate to
a homogeneous $G$-Lie foliation whose total group is $\Isom(X)\times G$.
\end{rms}

\begin{rms}
 Of course, the covering
in Theorem \ref{main1_thm} is understood to be smooth;
however (as the proof will suggest), as 
a price to pay for this smoothness, the covering
will in general
not be locally isometric with respect to
the two leafwise locally symmetric metrics. In the same way,
in Remark \ref{no_cover_rms}, the smooth conjugation
will in general
not be isometric with respect to
the two leafwise locally symmetric metrics. See also
Pansu-Zimmer \cite{PZ}.
\end{rms}
The following quasi-isometric version will
follow by using theorems of Kleiner-Leeb \cite{KL} and Pansu \cite{Pansu} (Section \ref{qi_sec}).

\begin{thm} \label{main2_thm}
Let $X$ be a product of nonflat irreducible Riemannian symmetric spaces of non-compact types,
without factor homothetic to $\bH^d_{\RR}$  ($d\ge 1$)
 nor $\bH^d_{\CC}$ ($d\ge 1$).
 
 Let $(M,\F)$ be a connected closed manifold with a minimal $G$-Lie foliation
such that the universal cover of every leaf
 is quasi-isometric to $X$.
 
{Then,} $(M,\F)$ is finitely covered by
 a faithful
 pullback of a homogeneous $G$-Lie foliation
  whose total group is $\Isom(X)\times G$.
\end{thm}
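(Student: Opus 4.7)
The plan is to reduce Theorem \ref{main2_thm} to Theorem \ref{main1_thm} via quasi-isometric rigidity. The first input is the theorem of Kleiner-Leeb \cite{KL} for factors of real rank at least two, together with Pansu's theorem \cite{Pansu} for the exceptional rank-one factors $\bH^d_{\HH}$ and $\bH^2_{\OO}$. Under the hypotheses on $X$ these together assert that every self-quasi-isometry of $X$ lies at bounded distance from an isometry; the excluded factors $\bH^d_{\RR}$ and $\bH^d_{\CC}$ are precisely those where this fails. Consequently the group of self-quasi-isometries of $X$ modulo bounded perturbation is $\Isom(X)$ up to a finite permutation of homothetic irreducible factors, which is what forces the finite covering in the conclusion. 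In particular, any group of isometries of a metric space quasi-isometric to $X$ carries a canonical homomorphism to $\Isom(X)$ well-defined up to conjugation.

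The core construction is to manufacture from the quasi-isometric data a companion foliated manifold $(M^X, \F^X)$ of the same codimension and structural group $G$, whose leaves are \emph{genuinely} locally isometric to $X$, together with a smooth foliated map $f : M \to M^X$ realizing $\F = f^* \F^X$ as a faithful pullback. Leafwise, for each leaf $L$ of $\F$ the deck action $\pi_1 L \curvearrowright \wt L$ is by isometries, and conjugation by any quasi-isometry $\wt L \to X$, together with the previous step, yields a canonical homomorphism $\rho_L : \pi_1 L \to \Isom(X)$. The quasi-isometry should then be smoothed into a genuine $\rho_L$-equivariant smooth map $\wt L \to X$, e.g.\ by a barycentric or local averaging procedure adapted to the bounded geometry inherited from $M$, and descended to a submersion $L \to \rho_L(\pi_1 L)\backslash X$.

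To globalize across leaves I would use the Fedida description $\dev : \wt M \to G$ together with $\pi_1 M$-equivariance to organize the leafwise constructions into a single transversely continuous, leafwise smooth foliated map $f : M \to M^X$, where $(M^X, \F^X)$ carries a transversely continuous, leafwise smooth $C^0$ Riemannian metric locally isometric to $X$. Theorem \ref{main1_thm} applied to $(M^X, \F^X)$ then gives a finite homogeneous cover with total group $\Isom(X) \times G$; chased back through $f$, this yields the desired conclusion for $(M, \F)$.

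The hard part is the globalization step. The individual leafwise rigidity data must be made to vary continuously in the transverse direction in a context where $\pi_1 L$ typically does \emph{not} act cocompactly on $\wt L$, since in the minimal case the leaves are dense and non-compact. This blocks a direct appeal to Mostow-style lattice rigidity and forces the argument to rest on the rigidity of quasi-isometries themselves; the transverse continuity must then be extracted from the canonicity up to bounded perturbation of the Kleiner-Leeb/Pansu correspondence, for which the barycentric techniques of Pansu-Zimmer \cite{PZ} are the natural template.
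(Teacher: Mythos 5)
Your proposal has the right tools in view (Kleiner--Leeb for higher-rank factors, Pansu for $\bH^d_\HH$ and $\bH^2_\OO$, finite permutation of homothetic factors forcing a finite cover) but organizes the construction around the wrong group, and the step you flag as hard is hard precisely because of that choice. You try to build leafwise homomorphisms $\rho_L:\pi_1L\to\Isom(X)$ and then glue them transversely into a companion manifold $(M^X,\F^X)$. Since the leaves of a minimal Lie foliation are dense and non-compact, $\pi_1L$ does not act cocompactly on $\widetilde L$, the chosen quasi-isometry $\widetilde L\to X$ is far from canonical, and there is no visible mechanism by which the $\rho_L$ vary continuously with the leaf; you name this obstacle but do not overcome it, and the invocation of Pansu--Zimmer style averaging is only a hope, not an argument.

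The paper sidesteps the leafwise picture entirely. Fix quasi-inverse quasi-isometries $\alpha:X\to\dev^{-1}(e_G)$ and $\beta:\dev^{-1}(e_G)\to X$ on a \emph{single} fibre, and use the right quasi-action $\Phi$ of $G$ on $\widetilde M$ from Section~\ref{qa_sec} to define
$\Xi:\pi_1M\to\operatorname{QI}(X)$, $\Xi(\gamma)(x)=\beta\bigl(\gamma^{-1}\cdot(\Phi(\hol(\gamma))\circ\alpha)(x)\bigr)$.
Cocompactness of $\pi_1M$ on $\widetilde M$ (not of $\pi_1L$ on $\widetilde L$) and Lemma~\ref{dev_pts} give the uniform constants making $\Xi$ a homomorphism up to bounded error into self-quasi-isometries of $X$; Kleiner--Leeb and Pansu then upgrade it to a genuine homomorphism $\Psi:\pi_1M\to\Isom(X)$, well defined after passing to a finite cover to kill the permutation of homothetic factors. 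One sets $\Gamma=(\Psi\times\hol)(\pi_1M)\subset\Isom(X)\times G$, which is a uniform lattice, takes any section of the flat bundle $X\times_\Psi M\to M$ (it exists since $X$ is contractible), lifts it to $\tilde\sigma:\widetilde M\to X$, and checks that $\tilde\sigma\times\dev$ descends to a faithful pullback map $M\to\Gamma\backslash(X\times G)$. No auxiliary manifold $(M^X,\F^X)$ is needed, and Theorem~\ref{main1_thm} is not invoked at all: the target $\Gamma\backslash(X\times G)$ is already homogeneous by construction. The concrete gap in your write-up is therefore the globalization step; the missing idea is to apply quasi-isometric rigidity to the $\pi_1M$-action twisted by $\Phi$ on one developing fibre, not to the deck actions of the individual leaves.
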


Neither Theorem \ref{main1_thm} nor Theorem \ref{main2_thm} would hold
 if $X$ were the Poincar\'{e} disk. Indeed, there exists (\cite[Theorem 1.5]{HMM})
 a $\PSL(2,\R)$-Lie foliation $\F$ whose leaves are the $2$-sphere $S^{2}$ minus a Cantor set,
 and whose holonomy group $\Gamma\subset\PSL(2,\R)$ is a cocompact irreducible lattice in
 $\PSL(2,\Q_2)\times\PSL(2,\R)$. Hence, for every short exact sequence
 of \emph{real} Lie groups  as in Example \ref{hom_ex} with $G=\PSL(2,\R)$,
 the group $\Gamma$ cannot be the image in $\PSL(2,\R)$
 of any cocompact lattice in
 $L$, by the Margulis superrigidity
 theorem --- or if one likes better, by an argument of cohomological dimension.
 The same holds for every finite-index subgroup in $\Gamma$.
 In other words, $\Gamma$ cannot be, even virtually, the holonomy group of any
 homogeneous $\PSL(2,\R)$-Lie foliation.
 On the other hand,
 after Candel's uniformisation theorem \cite{Candel}, $\F$ admits a
  $C^{0}$ Riemannian metric
   whose restriction to every leaf is smooth of curvature $-1$.

\medbreak

Before stating some more corollaries of Theorem~\ref{main1_thm}, 
let us first recall a second important family of
 Lie foliations.

\begin{ex}[Suspension Lie foliations]\label{susp_ex}
Given a \emph{compact} connected Lie group $G$, a closed connected manifold $V$ and a representation $\rho:\pi_1V\to G$, its suspension $\F$ on $M:=V\times_\rho G$ is a $G$-Lie foliation, which is minimal if $\rho(\pi_1V)$ is dense in $G$. If moreover $V$ is a locally homogeneous manifold, then $\F$ is also a homogeneous Lie foliation. 
\end{ex}

Once we know that the given foliation is
homogeneous up to a finite covering,
we can apply the theory of lattices to study the global structure.
Here are some consequences that generalise the results of Zimmer.
The first one is a refinement of Theorem \ref{main1_thm} for the case where $X$ is irreducible:
\begin{cor}\label{dic_cor}
Let $X$ be an \emph{irreducible} symmetric space of non-compact type of
real dimension $n\ge 3$.
 Let $(M,\F)$ be a minimal Lie foliation on a closed manifold. Assume that $\F$ 
bears a $C^{0}$ metric whose restriction to every leaf is
smooth and locally isometric to $X$. Then, $(M,\F)$
is finitely covered by either a homogeneous Lie foliation
(Example \ref{hom_ex}) with irreducible $\G$,
or a suspention Lie foliation (Example \ref{susp_ex})
whose fibre is  a compact quotient group of $G$.
\end{cor}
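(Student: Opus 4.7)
The plan is to apply Theorem \ref{main1_thm} and then analyse the structure of the resulting lattice. Since $X$ is irreducible of real dimension $n \ge 3$, $X$ has no factor homothetic to $\bH^{2}_{\RR}$ (which has real dimension $2$), so Theorem \ref{main1_thm} yields a finite cover of $(M,\F)$ of the form $(M_{0}, \F_{0}) = \Gamma \backslash L / K$, where $L = H \times G$ with $H := \Isom(X)$, $K \subset H$ is the isotropy of a basepoint of $X$, and $\Gamma \subset L$ is a cocompact lattice. Minimality of $\F$ forces the projection $p_{G}(\Gamma)$ to be dense in $G$, and irreducibility of $X$ makes $H^{0}$ a simple Lie group of non-compact type.

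The key intermediate step would be a dichotomy on the closed subgroup $N := \overline{p_{H}(\Gamma)} \subseteq H$. Since $p_{H}(\Gamma) \subseteq N$, the lattice $\Gamma$ is contained in $N \times G$, and the natural projection $\Gamma \backslash L \to N \backslash H$ is a fibration with compact total space; hence $N \backslash H$ is compact, i.e., $N$ is cocompact in $H$, and $\Gamma$ is a cocompact lattice in $N \times G$. I then aim to prove that either $N \supseteq H^{0}$ (case (a)), or $N$ is discrete in $H$, hence a uniform lattice (case (b)). The main obstacle is ruling out intermediate possibilities in which $N^{0}$ is a proper non-trivial closed subgroup of $H^{0}$, for instance a proper parabolic subgroup, since such subgroups are themselves cocompact. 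This is the step requiring a genuinely new rigidity input; I would invoke an orbit-closure rigidity argument of the Ratner--Mozes--Shah type, combining the simplicity of $H^{0}$ with the density of $p_{G}(\Gamma)$ in $G$ and the lattice property of $\Gamma$ in $N \times G$.

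In case (a), the density of $p_{H}(\Gamma)$ in $H^{0}$ together with that of $p_{G}(\Gamma)$ in $G$ makes $\Gamma$ an irreducible lattice in $L$ (after passing to a finite-index subgroup to reduce to $H = H^{0}$), which is case (1). In case (b), the fibration $\Gamma \backslash L \to N \backslash H$ has fibres diffeomorphic to $\Gamma_{G} \backslash G$, where $\Gamma_{G} := \Gamma \cap (\{1\} \times G)$; compactness of the fibres forces $\Gamma_{G}$ to be a cocompact lattice in $G$. Being a discrete normal subgroup of $\Gamma$ whose $G$-projection is itself and is normalised by the dense subgroup $p_{G}(\Gamma)$, $\Gamma_{G}$ is normal in the connected Lie group $G$, hence central. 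Therefore $Q := G/\Gamma_{G}$ is a compact Lie group. After a further finite cover that splits the central extension $1 \to \Gamma_{G} \to \Gamma \to N \to 1$, $(M_{0}, \F_{0})$ is identified with the suspension $V \times_{\rho} Q$ of Example \ref{susp_ex} applied to $Q$, where $V := N \backslash X$ is a closed locally symmetric space and $\rho \colon \pi_{1} V \to Q$ is a homomorphism with dense image; the fibre $Q$ is the required compact quotient of $G$, so we are in case (2).
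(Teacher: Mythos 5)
Your overall strategy---apply Theorem~\ref{main1_thm} to get a homogeneous model $\Gamma\backslash(H\times G)/K$ and then run a dichotomy on $N:=\overline{p_H(\Gamma)}$---matches the paper's, which proves the corollary via Proposition~\ref{lochom_prop}. Your treatment of the two endpoints of the dichotomy (irreducible lattice in case (a), suspension over $N\backslash X$ with fibre $G/\Gamma_G$ in case (b), including the observation that $\Gamma_G=\Gamma\cap G$ is normalised by the dense group $p_G(\Gamma)$ and hence central) is sound and essentially the same as the paper's.

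However, there is a genuine gap in the middle. You correctly identify the crux as ruling out the possibility that $N^0$ is a proper nontrivial closed connected subgroup of $H^0$, but you declare this to require ``a genuinely new rigidity input'' and gesture at an unproved Ratner--Mozes--Shah type orbit-closure argument. That is both unnecessary and not obviously applicable: Ratner-type theorems concern closures of orbits of unipotent (or more generally $\mathrm{Ad}$-unipotently generated) subgroups in homogeneous spaces $\Gamma\backslash L$, whereas what is needed here is a statement about the closure of the \emph{projection} of a lattice to a factor. The correct tool is much more elementary: the Borel density theorem. Since $H=\Isom(X)$ is semisimple with no compact factors, the projection $p_H(\Gamma)$ of the lattice $\Gamma\subset H\times G$ is Zariski dense in $H$. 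Because $N^0$ is normal in the closed group $N$, its Lie algebra is invariant under $\Ad(N)$, and the stabiliser of $\Lie(N^0)$ under $\Ad$ is Zariski closed and contains $p_H(\Gamma)$; by Zariski density it is all of $H$, so $N^0$ is normal in $H$. As $X$ is irreducible, $H^0$ is simple, so $N^0$ is either trivial (hence $N$ is discrete) or all of $H^0$. This is exactly the argument in the paper's Proposition~\ref{lochom_prop}, and it dispatches the parabolic subgroup worry at a stroke, since parabolics are not normal. You should replace the Ratner appeal with this short Borel-density argument; as written your proof does not establish the dichotomy on which everything else rests.
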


When the model symmetric space $X$ is reducible, the foliation can be somehow a composite
of the homogeneous case with the suspension case:
 a suspension foliation whose fibres are homogeneous Lie foliation given by an irreducible lattice (see Proposition \ref{lochom_prop} below).
\medbreak
By Theorem~\ref{main1_thm} {and the
Molino structure theory for Riemannian foliations}, we obtain a rigidity result for minimal Riemannian foliations with locally symmetric leaves (see Theorem \ref{uni_thm}).

The next corollary
is a generalisation of Zimmer's arithmeticity theorem of holonomy groups (\cite[Theorem~A-(3)]{Zimmer}).

\begin{cor}\label{z_cor}
Let $X, M, \F$ be as in Theorem~\ref{main1_thm}.
 
 Then, the adjoint {$\Ad_{G}(\hol(\pi_1M))$} of the holonomy group of $(M,\F)$ is arithmetic in $\Ad_{G}(G)$. 
\end{cor}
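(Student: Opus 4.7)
I would first apply Theorem~\ref{main1_thm} to reduce to the homogeneous case. It provides a finite covering $\pi\colon(\tilde M,\tilde\F)\to(M,\F)$ such that $(\tilde M,\tilde\F)$ is a homogeneous $G$-Lie foliation with total group $L:=H\times G$, where $H:=\Isom(X)$. By Example~\ref{hom_ex}, write $\tilde M=\Gamma\backslash L/K$ with $\Gamma\subset L$ a cocompact lattice and $K\subset H$ compact. The holonomy group of $\tilde\F$ is $\Lambda:=p_G(\Gamma)$, which sits inside $\hol(\pi_1M)$ as a finite-index subgroup via the inclusion $\pi_1\tilde M\subset\pi_1M$. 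Since arithmeticity in $\Ad_G(G)$ is invariant under commensurability, it will suffice to prove that $\Ad_G(\Lambda)$ is arithmetic in $\Ad_G(G)$.

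Next I would show that $p_G$ identifies $\Gamma$ with $\Lambda$ up to a finite-index refinement. Minimality of $\tilde\F$ forces $\Lambda$ to be dense in $G$; a Borel-density argument applied to the lattice $\Gamma\subset H\times G$ shows that $p_H(\Gamma)$ is Zariski dense in the semisimple group $H$. The kernel $\Gamma\cap H$ of $p_G|_\Gamma$ is a discrete subgroup of $H$ normalised by $\Gamma$, hence by $p_H(\Gamma)$, and therefore by its Zariski closure $H$; being discrete and normal in the connected group $H$, it is central and thus finite. Passing to a further finite-index subgroup, absorbed into the finite cover, I may assume that $p_G|_\Gamma\colon\Gamma\to\Lambda$ is an isomorphism.

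Finally I would invoke the Margulis arithmeticity theorem for the lattice $\Gamma\subset L$. The hypothesis that $X$ has no Poincar\'{e} disk factor is equivalent to $H$ having no factor locally isomorphic to $\PSL(2,\R)$. The higher-rank factors of $H$ are handled by Margulis's original theorem; the rank-one exceptional factors $\operatorname{Sp}(n,1)$ and $F_{4(-20)}$ by Corlette's and Gromov--Schoen's theorems; and the remaining rank-one factors $\bH^d_\R$ (with $d\ge 3$) and $\bH^d_\C$ by Zimmer's cocycle superrigidity, which uses the dense projection of $\Gamma$ onto $G$ as a second factor. Taken together, these yield that the pair $(\Gamma,L)$ is arithmetic in Zimmer's sense; reading this conclusion through $\Ad_G\circ p_G$ gives arithmeticity of $\Ad_G(\Lambda)$ in $\Ad_G(G)$. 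The hardest step will be this last one in the rank-one real or complex hyperbolic case: the lattice $\Gamma$, viewed purely inside $H$, may well be non-arithmetic when $X$ involves $\bH^d_\R$ with $d\ge 3$ or $\bH^d_\C$, so arithmeticity of $\Ad_G(\Lambda)$ must be extracted genuinely from the product structure $H\times G$ and from the density of $\Lambda$ in $G$---exactly the regime where cocycle superrigidity is indispensable.
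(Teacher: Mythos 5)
Your overall strategy matches the paper's: reduce to the homogeneous case via Theorem~\ref{main1_thm}, argue that $\Gamma\cap H$ is finite so that $p_G$ essentially identifies $\Gamma$ with the holonomy group, and then invoke arithmeticity of lattices in the product $H\times G$. The reduction and the finite-kernel argument are fine.

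Where you diverge from the paper is in the last and hardest step, and I think your route there is shaky. You propose to handle the $\bH^d_\RR$ ($d\ge 3$) and $\bH^d_\CC$ factors of $H$ via ``Zimmer's cocycle superrigidity, which uses the dense projection of $\Gamma$ onto $G$ as a second factor.'' But once Theorem~\ref{main1_thm} has been used, $\Gamma$ is an honest uniform lattice in $H\times G$, and cocycle superrigidity is no longer the appropriate tool --- nor is it clear it applies, since Zimmer's cocycle theorem carries its own higher-rank hypotheses and was designed for a situation in which one does \emph{not} yet know the cocycle is a lattice in a product. What the paper does instead is invoke Proposition~\ref{lochom_prop}, which splits $H=F\times H'$ so that $\Gamma\cap(F\times G)$ is an \emph{irreducible} uniform lattice in $F\times G$, a semisimple group whose real rank is $\ge 2$ because both $F$ and $G$ are non-compact. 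At that point Margulis's classical arithmeticity theorem (plus Corlette and Gromov--Schoen for $\Isom(\bH^d_\HH)$ and $\Isom(\bH^2_\OO)$) applies directly; no cocycle version is needed, and this is exactly how the paper upgrades $\bH^d_\RR$ and $\bH^d_\CC$ factors from potentially non-arithmetic rank-one situations to higher-rank ones.

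A second gap: you never treat the case where $p_H(\Gamma)$ is discrete in $H$, i.e.\ the suspension case of Proposition~\ref{lochom_prop}~(ii), in which $G$ is compact modulo its centre. Your argument tacitly assumes the ``product structure'' carries genuine non-compact $G$-rank; in the suspension case it does not, and $\Ad_G(G)$ is compact. The paper flags this explicitly (the arithmeticity statement in $\Ad_G(G)$ then requires extra discussion, and the paper notes that when $X$ has $\bH^d_\RR$ or $\bH^d_\CC$ factors the quotient $G'$ need not even be semisimple). Any complete proof of the corollary has to separate this branch out, as the paper does via Proposition~\ref{lochom_prop}.
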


 Zimmer proved this statement in the case where the leaves of $\F$ are simply connected and
where each irreducible factor of $X$ is of real rank at least {2}. He mentioned that his arithmeticity theorem may also
hold true for the cases
where the leaves are not simply connected. Our Corollary~\ref{z_cor} covers these cases. Quiroga-Barranco \cite{Quiroga} proved the arithmeticity for Lie foliations with totally geodesic leaves.

Finally, our Theorem \ref{main1_thm} together with Johnson's theorem~\cite{Johnson}
 give the following generalisation of \cite[Theorems A-(5) and B]{Zimmer}. 

\begin{cor}\label{codim_cor}
Let $(M,\F)$ and $X$ be as in Theorem~\ref{main1_thm}. Let $X=\prod_{i} X_{i}$ be the decomposition of $X$ into irreducible symmetric spaces. Unless  $(M,\F)$ is finitely covered by a suspension Lie foliation,
we have $\codim (\F) \geq d(X)$, where \[d(X)= \min_{i} \dim(\Isom(X_{i})).\]
\end{cor}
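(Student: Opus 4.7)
By Theorem \ref{main1_thm}, after passing to a finite covering I may assume that $(M,\F)$ is itself a homogeneous $G$-Lie foliation $\Gamma\backslash L/K$ with total group $L=\Isom(X)\times G$, where $K\subset\Isom(X)$ is a maximal compact subgroup with $\Isom(X)/K=X$, and $\Gamma\subset L$ is a cocompact lattice whose projection $\pi_G:L\to G$ satisfies $\overline{\pi_G(\Gamma)}=G$. Then $\codim(\F)=\dim G$. Writing $H:=\Isom(X)=\prod_i H_i$ with $H_i=\Isom(X_i)$ a simple noncompact Lie group, the statement reduces to proving $\dim G\ge d(X)=\min_i\dim H_i$ unless $(M,\F)$ is finitely covered by a suspension. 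I proceed by a dichotomy on whether $G$ is compact.

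Suppose first that $G$ is compact. Then the projection $\pi_H:L\to H$ has compact fibres $\{h\}\times G$ and is therefore proper; hence $\Lambda:=\pi_H(\Gamma)$ is discrete in $H$, and since $\Lambda\backslash H$ is the continuous image of the compact quotient $\Gamma\backslash L$ it is compact, so $\Lambda$ is a cocompact lattice in $H$. The kernel $\Gamma\cap G$ of $\pi_H|_\Gamma$ is finite (a discrete subgroup of the compact $G$), so after passing to a finite-index subgroup of $\Gamma$ that trivially intersects $\Gamma\cap G$, we can identify $\Gamma$ with the graph of a homomorphism $\rho:\Lambda\to G$. Then $\Gamma\backslash L/K\cong (\Lambda\backslash H/K)\times_\rho G$ is a suspension in the sense of Example \ref{susp_ex}, as required.

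Assume henceforth that $G$ is noncompact. Here the key input is Johnson's theorem \cite{Johnson}. Applied to the cocompact lattice $\Gamma\subset H\times G$, with $H$ semisimple without compact factors and with $\pi_G(\Gamma)$ dense in the noncompact group $G$, the theorem produces, up to finite covers and compact central modifications, a nontrivial continuous surjective homomorphism from a nonempty subproduct $H_J:=\prod_{i\in J}H_i$ of the simple factors of $H$ onto $G$. Because each $H_i$ is simple, this surjection is faithful on at least one factor $H_{i_0}$ with $i_0\in J$; comparing dimensions yields $\dim G\ge \dim H_{i_0}\ge d(X)$.

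The main technical obstacle is the correct application of Johnson's theorem in the reducible case, where $X$ has several irreducible factors (possibly of the same isomorphism type), in which $\Gamma$ need not project injectively to any individual $H_i$ and may be irreducible only with respect to specific subproducts. Careful bookkeeping of the decomposition $H=\prod_i H_i$, of the irreducibility behaviour of $\Gamma$ with respect to each subproduct, and of which subproduct carries the surjection produced by Johnson's theorem, is required to identify the simple factor whose dimension controls $\dim G$ and to conclude the codimension bound.
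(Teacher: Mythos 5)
Your overall strategy (reduce to the homogeneous case via Theorem~\ref{main1_thm}, then invoke Johnson's theorem for a dimension bound) is broadly in the spirit of the paper's argument, but there are two genuine gaps.

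First, your dichotomy ``$G$ compact / $G$ noncompact'' is not the right one, and it does not line up with the dichotomy ``suspension / not suspension'' that the statement requires. A noncompact simply connected $G$ can admit a compact quotient $G'$ of the same dimension (e.g.\ $G=\R$, $G'=S^1$), and then a cocompact lattice $\Gamma\subset H\times G$ can well have $p_H(\Gamma)$ discrete; in that case the foliation is finitely covered by a suspension $G'$-Lie foliation and $\dim G$ can be smaller than $d(X)$. Your noncompact branch goes directly for the bound $\dim G\ge d(X)$, which is false there. The paper avoids this by proving Proposition~\ref{lochom_prop} first: the relevant dichotomy is whether $p_H(\Gamma)$ is discrete in $H$ (in which case a finite cover is a suspension) or not (in which case, using Auslander's theorem, Selberg's lemma and Margulis' results, $G$ is shown to be semisimple noncompact and one extracts a decomposition $X=X'\times X''$ with an \emph{irreducible} uniform lattice in $\Isom(X'')\times G$). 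Almost all of the work in the corollary's proof is hidden in that proposition, not in Johnson's theorem.

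Second, Johnson's theorem is misstated. It does not ``produce a nontrivial continuous surjective homomorphism from a subproduct $H_J$ onto $G$.'' What it says is that a semisimple Lie group (without compact factors, up to centre) admits an irreducible uniform lattice only if its complexified Lie algebra is a direct sum of mutually isomorphic simple ideals. The paper applies exactly this, to the group $H''\times\Ad_G(G)/C$ carrying the irreducible lattice supplied by Proposition~\ref{lochom_prop}: then $\Lie(\Ad_G(G)/C)\otimes\CC$ must contain at least one of those ideals, each of which is the complexification of some $\Lie(\Isom(X_i))$, and the dimension count follows. In your proposal you acknowledge at the end that the reducible case (passing to the right subproduct, with an irreducible lattice) is ``the main technical obstacle'' requiring ``careful bookkeeping,'' but that is precisely the content of Proposition~\ref{lochom_prop}, and it is not carried out. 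As written, the noncompact branch of the argument both relies on a theorem that does not say what you need and presupposes an irreducibility that has not been established.
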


{Note that suspension foliations should be excluded from the original result \cite[Theorem A-(5)]{Zimmer} (see Example \ref{ex:Z}).}

{The structure of the article is as follows. In Section \ref{qa_sec}, we introduce a quasi-action of the structural group on the universal cover of a leaf, which is the basic tool in this paper. Sections \ref{hyp_sec} to \ref{gen_sec} are devoted to the proof of Theorem \ref{main1_thm}. In Section \ref{qi_sec}, we prove Theorem \ref{main2_thm}. The subsequent two sections present structure theorems for homogeneous Lie foliations, which are used to derive corollaries from Theorem \ref{main1_thm} and to apply the results to Riemannian foliations. We state several problems in the last section.}

\section{Quasi-action of the structural group}\label{qa_sec}
Here, one quickly recalls some classical material about Lie foliations, without any hypothesis
 on the geometry of the leaves; one fixes some notations; and one introduces what
  can be called
 a right \emph{quasi-action} of the structural group $G$ on the universal cover of the
 foliated manifold,
 lifting the action of $G$ on itself by right translations.

 We are given a connected Lie group $G$, a closed connected manifold $M$, and on $M$ a minimal $G$-Lie foliation $\F$.

Fix on $G$ a left-invariant Riemannian metric $\lambda$. Fix on $M$ a smooth Riemannian metric $\rho$ which coincides with $\lambda$ on the plane field $\rho$-orthogonal to $\F$ in $M$ (\emph{bundle-like metric}, after \cite{Reinhart}).
Endow $\widetilde M$ with the lift $\tilde\rho$ of $\rho$. For
 every $h\in G$, we denote by $\tilde\rho_h$ the \emph{intrinsic Riemannian}
 metric induced by $\tilde\rho$ on the fibre $\dev\mun(h)$.
   At every $x\in\widetilde M$, the differential of $\dev$ restricted to the subspace $\tilde\rho$-orthogonal to the fibre through $x$, is a linear isometry from this subspace onto $\tau_{\dev(x)}G$ (\emph{Riemannian submersion}).

For every $g\in G$, fix any shortest $\lambda$-geodesic segment $[e_G,g]$ from the neutral element $e_G$ to $g$ in $G$. For every $x\in\widetilde M$, the left-translated geodesic segment $\dev(x)[e_G,g]$ lifts uniquely through $\dev$ as a path $[x,y]$ in $\widetilde M$ which starts at $x$ and is 
 $\tilde\rho$-orthogonal to the fibres. Define $$\Phi(g)(x)=y
 \in\dev\mun(\dev(x)g)$$ as the endpoint
 of the lifted path.

In this paper, given a metric $s$ on a space $S$, we write $\vert x-y\vert_s$ for the $s$-distance
between two points $x, y\in S$; and given two maps $f, g: S'\to S$, we write $\Vert f-g\Vert_s$
for the supremum of the distances  $\vert f(x)-g(x)\vert_s$, where $x\in S'$.

\begin{lem} \label{dev_pts} For every $g, g', h\in G$ and $x\in\dev\mun(h)$ and  $\gamma\in\pi_1M$:
\begin{enumerate}
\item  $\vert\Phi(g)(x)-x\vert_{\tilde\rho}=\vert g-e_G\vert_\lambda$;
\item 
$\Phi(g)(\gamma\cdot x)=\gamma\cdot\Phi(g)(x)$;
\item
$\Phi(g)$ is a bi-Lipschitz self-diffeomorphism of $\widetilde M$
 with respect to the metric $\tilde\rho$; and
the Lipschitz ratios of $\Phi(g)$ and of $\Phi(g)\mun$ on $\widetilde M$
 with respect to $\tilde\rho$, seen as two real functions
of $g\in G$, are bounded on every compact subset of $G$;
\item $\Phi(g)\to\id$, \emph{uniformly on $\widetilde{M}$
 in the $C^1$ topology,}  as $g\to e_G$ in $G$;
 \item
 $\vert\Phi(gh)(x)-\Phi(h)(\Phi(g)(x)) \vert_{\tilde\rho_{\dev(x)gh}}$
  has a bound depending on $g, h\in G$ but not
   on $x\in\widetilde M$.
\end{enumerate}
\end{lem}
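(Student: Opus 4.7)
The plan is to derive assertions (i) and (ii) directly from the Riemannian submersion property of $\dev:(\widetilde M,\tilde\rho)\to(G,\lambda)$ together with the equivariance identity $\dev\circ\gamma=L_{\hol(\gamma)}\circ\dev$. The horizontal lift $[x,\Phi(g)(x)]$ projects isometrically onto the minimising $\lambda$-geodesic $\dev(x)[e_G,g]$, so it has length $\vert g-e_G\vert_\lambda$; conversely, any path in $\widetilde M$ joining these endpoints projects, under the $1$-Lipschitz map $\dev$, to a path in $G$ from $\dev(x)$ to $\dev(x)g$ of length at least $\vert g-e_G\vert_\lambda$ by left-invariance of $\lambda$. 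This proves (i). For (ii), every deck transformation $\gamma$ is a $\tilde\rho$-isometry projecting to the $\lambda$-isometry $L_{\hol(\gamma)}$, so its differential preserves the horizontal distribution; by uniqueness of horizontal lifts, $\gamma_*[x,\Phi(g)(x)]$ coincides with $[\gamma x,\Phi(g)(\gamma x)]$, matching the endpoints.

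For (iii) and (iv) I would use that the joint map $\Psi:G\times\widetilde M\to\widetilde M$, $(g,x)\mapsto\Phi(g)(x)$, is smooth by smooth dependence of horizontal lifts on initial data and on the family $g\mapsto\dev(x)[e_G,g]$. An inverse to $\Phi(g)$ is furnished by horizontally lifting the reversed geodesic, so $\Phi(g)$ is a smooth diffeomorphism of $\widetilde M$. By (ii) it descends to a self-diffeomorphism of the compact manifold $M$; its $\rho$-Lipschitz ratio there is finite, equals the $\tilde\rho$-Lipschitz ratio on $\widetilde M$, and depends continuously on $g\in G$, hence is bounded on every compact subset of $G$; the same applies to $\Phi(g)\mun$. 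For (iv), one has $\Phi(e_G)=\id$; joint smoothness of $\Psi$ yields uniform $C^1$-smallness of $\Phi(g)-\id$ on any compact fundamental domain for $\pi_1 M$, and equivariance extends this estimate to all of $\widetilde M$ because deck transformations are $\tilde\rho$-isometries.

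Assertion (v) is the essential content. I would set $F(x)=\vert\Phi(gh)(x)-\Phi(h)(\Phi(g)(x))\vert_{\tilde\rho_{\dev(x)gh}}$ and check that $F$ is $\pi_1M$-invariant: by (ii) both arguments are translated by $\gamma$, and since $\gamma$ is a $\tilde\rho$-isometry with $\dev\circ\gamma=L_{\hol(\gamma)}\circ\dev$, it restricts to an isometry between the intrinsic fiber metrics on $\dev\mun(\dev(x)gh)$ and $\dev\mun(\dev(\gamma x)gh)$. Hence $F$ descends to a real-valued function on the compact manifold $M$, and the required $x$-independent bound will follow as soon as $F$ is continuous and everywhere finite. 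The main obstacle will be finiteness: one must verify that $\Phi(gh)(x)$ and $\Phi(h)(\Phi(g)(x))$ always lie in a common connected component of the fiber $\dev\mun(\dev(x)gh)$. I would establish this by a connectedness argument on $G\times G$: at fixed $x$, the set of pairs $(g,h)$ for which the property holds contains $(e_G,e_G)$, is open by local triviality of $\dev$ as a foliated submersion together with continuous dependence of horizontal lifts, and is closed because the components of each fiber form a family of closed subsets varying continuously with the base point; connectedness of $G\times G$ then forces the property everywhere. Once finiteness is in hand, continuity of $F$ within each component is routine, and compactness of $M$ delivers the bound.
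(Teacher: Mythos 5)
Your arguments for (i), (ii), and the overall strategy for (v) --- fibrewise $\pi_1M$-invariance, continuity in $x$, and compactness of $M$ --- coincide with the paper's (very terse) proof. Two specific issues deserve attention.

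First, your treatment of (iii) and (iv) rests on the claim that $(g,x)\mapsto\Phi(g)(x)$ is jointly smooth on $G\times\widetilde M$. This is false in general: $\Phi(g)$ is built from a \emph{chosen} shortest $\lambda$-geodesic from $e_G$ to $g$, and for a generic left-invariant metric no continuous such choice exists across the cut locus of $e_G$. The paper's remark immediately after the lemma acknowledges exactly this ambiguity. Near $g=e_G$ the shortest geodesic is unique and depends smoothly on $g$, so your argument for (iv) survives. But "the Lipschitz ratio depends continuously on $g$" in (iii) is unjustified; what is true, and what the paper is gesturing at with "elementary properties of the geodesics of $\lambda$" and "cocompacity", is that $\Phi(g)$ restricted to a fibre is the time-one map of the horizontal lift of a (time-dependent) left-invariant field, so by compactness of $M$ its ambient $\tilde\rho$-Lipschitz ratio is bounded by $\exp\bigl(C\,\vert g-e_G\vert_\lambda\bigr)$ for a uniform constant $C$. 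Since $g\mapsto\vert g-e_G\vert_\lambda$ is continuous, this gives boundedness on compacts without any continuity of $g\mapsto\Phi(g)$.

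Second, your finiteness concern in (v) is a genuine subtlety the paper glosses over: when $\pi_1G\neq 0$ the fibres of $\dev$ are disconnected (the paper itself notes $\pi_0(\dev^{-1}(g))\cong\pi_1G$), so $\Phi(gh)(x)$ and $\Phi(h)(\Phi(g)(x))$ could a priori lie in different components, making the intrinsic distance infinite. However, the open-and-closed argument you propose on $G\times G$ inherits the same discontinuity problem: both openness and closedness require some continuity of the geodesic choice in $(g,h)$, which fails; worse, when two shortest geodesics from $e_G$ to $gh$ are not homotopic rel endpoints (possible when $\pi_1G\neq 0$), the \emph{component} of $\Phi(gh)(x)$ can actually jump. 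The clean resolution is the reduction, stated explicitly at the start of Section 4 where the lemma is applied, that $G$ may be taken simply connected: then every fibre of $\dev$ is connected and the intrinsic distance in (v) is automatically finite, after which your descent-to-$M$ plus continuity argument closes the proof exactly as in the paper.
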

\begin{proof} (i) and (ii) are obvious by construction.
(iii) and (iv)  follow from (ii),
from the cocompacity of the action of $\pi_1M$ on $\widetilde M$,
and from the elementary properties of the geodesics of the complete Riemannian metric
$\lambda$. As for (v): 
 the considered distance is continuous with respect to $x\in\widetilde M$ (after (iii))
  and $\pi_1M$-invariant (after (ii)), hence bounded.
\end{proof}

\begin{rms}
The diffeomorphism $\Phi(g)$ depends on the choice of shortest geodesic segments from $e_G$ to $g$, but this choice will be irrelevant in the sequel.
\end{rms}
\begin{rms}
In general, the distribution $\tilde\rho$-orthogonal to the fibres being not integrable, $\Phi(gg')$ is \emph{not} the composite {$\Phi(g')\circ\Phi(g)$,} even for $g,g'$ close to $e_G$: for the moment, we have no genuine right $G$-action on $\widetilde M$.
\end{rms}
\begin{rms}
There is an alternative construction of the quasi-action $\Phi$, which would also work for our proof. Every $g\in G$ decomposes as $\exp(v_1) \cdot \cdots \cdot \exp(v_m)$, where each $v_i$ is some left-invariant vector field. Let $\tilde v_i$ be the lift of $v_i$ to $\widetilde M$ which is orthogonal to the fibres of $\dev$. One defines $\Phi(g):=\tilde v_m^1\circ \cdots \circ \tilde v_1^1$, where $\tilde v_i^1$ is the time one map of the flow generated by $\tilde v_i$. 
\end{rms}

It is convenient, for every $h, k\in G$, to introduce the restriction
  \[\Phi_h^{k}=\Phi(h\mun k)\vert_{ \dev\mun(h)} : \dev\mun(h) \to \dev\mun(k)\]
which is a bi-Lipschitz diffeomorphism between the two fibres with respect to the
metrics $\tilde\rho_h$, $\tilde\rho_k$.

\section{Rigidity of Lie foliations with hyperbolic leaves}\label{hyp_sec}
{Let us prove Theorem~\ref{main1_thm} in the case of real rank one.}
 Let $X$ be an irreducible symmetric space of non-compact type of real rank one, and of real dimension $n\ge 3$.
 In other words, by Cartan's classification, $X$ is one of the following {hyperbolic spaces}: $\bH_{\RR}^{n}$ (with $n\ge 3$), $\bH_{\CC}^{d}$ (with $d\ge 2$), $\bH_{\HH}^{d}$ (with $d\ge 1$) or $\bH_{\OO}^{2}$.
 
 One is given a connected Lie group $G$ which
 one may assume without loss of generality to be simply connected;
 $M$, $\F$, $\lambda$ as in Section \ref{qa_sec}; and
 one assumes that $\F$ bears a
 $C^{0}$ metric $\sigma$
  whose restriction to every leaf is
  smooth and locally isometric to $X$.
   Put $H := \Isom X$. Let $\eta$
   be the volume of the metric $\lambda$,
   hence a left-invariant Haar measure on $G$.
   
    We shall show that $\pi_1M$ embeds as a uniform lattice $\Gamma$ in $H\times G$, and that $\F$ is smoothly conjugate to the homogeneous foliation on $M_0:=\Gamma\backslash(X\times G)$ whose leaves are parallel to $X$.
\medbreak

Consider on every fibre $\dev\mun(h)$
 the symmetric Riemannian metric $\tilde\sigma_h$ which is the lift of $\sigma$.
 Since $\sigma$ is globally continuous,
  $\tilde\sigma_h$ and $\tilde\rho_h$ are
 equivalent, and the ratios of their norms have positive bounds not depending
  on $h$.

 { Both homotopy groups $\pi_1G$ and $\pi_2G$ 
 being trivial, $\dev\mun(h)$ is simply connected, hence isometric with $X$ for $\tilde\sigma_h$.
As such, this fibre has an ideal boundary at infinity $\partial\dev\mun(h)$,
 homeomorphic to the $(n-1)$-sphere.
 Let $\partial\widetilde{\mathcal{F}}$ be the disjoint union of the
boundaries $\partial\dev\mun(h)$, for $h\in G$.
Every element $\gamma\in\pi_1M$,
mapping isometrically $\dev\mun(h)$ onto $$\dev\mun(\hol(\gamma)h)$$
therefore extends
to a homeomorphism of $\partial\dev\mun(h)$ onto
 $$\partial\dev\mun(\hol(\gamma)h)$$
One thus gets a global
left action of $\pi_1M$ on the set $\partial\widetilde{\mathcal{F}}$.}

For every $h, k\in G$, the two metrics
  $\tilde\sigma_h$ and $\tilde\rho_h$ being equivalent on
  the fibre $\dev\mun(h)$,
  as well as   $\tilde\sigma_k$ and $\tilde\rho_k$ on
  the fibre $\dev\mun(k)$,
after Lemma \ref{dev_pts} (iii), the
 diffeomorphism
 $\Phi_h^{k}$ is bi-Lipschitz  with respect to $\tilde\sigma_h$ and $\tilde\sigma_k$,
 hence extends to a homeomorphism
  between the ideal boundaries at infinity:
 \[\partial\Phi_h^{k} :\partial\dev\mun(h) \to\partial\dev\mun(k).\]
 Globally, define for $g\in G$ the bijection $\partial\Phi(g)$
 of $\partial\widetilde{\mathcal{F}}$ such that for every $h\in G$ and $\xi\in\partial\dev\mun(h)$:
  \[\partial\Phi(g)(\xi)=\partial\Phi_h^{hg}(\xi).\]

 \begin{lem}\label{action_lem}
 For every $g, g'\in G$,  $\gamma\in\pi_1M$, $\xi\in\partial\widetilde{\mathcal{F}}$:
\begin{enumerate}
 \item
$\partial\Phi(gg')=\partial\Phi(g')\circ\partial\Phi(g)$.
\item $\gamma\cdot\partial\Phi(g)(\xi)=\partial\Phi(g)(\gamma\cdot\xi)$.
\end{enumerate}
\end{lem}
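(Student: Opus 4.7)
The plan is to deduce both identities from the corresponding statements for $\Phi(g)$ at the level of the fibres, exploiting the fact that each fibre $\dev\mun(h)$ equipped with the intrinsic metric $\tilde\sigma_h$ is isometric to the rank one symmetric space $X$, and is therefore Gromov hyperbolic (indeed negatively curved).

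For (ii), I would appeal directly to Lemma \ref{dev_pts}(ii), which says that $\Phi(g)$ commutes with every $\gamma\in\pi_1M$ on $\widetilde M$. Restricted to $\dev\mun(h)$, this reads
\[
\gamma\circ\Phi_h^{hg}=\Phi_{\hol(\gamma)h}^{\hol(\gamma)hg}\circ\gamma
\]
as maps $\dev\mun(h)\to\dev\mun(\hol(\gamma)hg)$. Since $\gamma$ preserves the leafwise metric $\sigma$, it acts by isometries on the fibres for $\tilde\sigma$ and extends continuously to a homeomorphism between their ideal boundaries. Passing to the boundary extension on each side, and unfolding the definition of $\partial\Phi(g)$, yields the claimed equivariance.

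For (i), I would compare the two bi-Lipschitz maps $\Phi(gg')\vert_{\dev\mun(h)}$ and $(\Phi(g')\circ\Phi(g))\vert_{\dev\mun(h)}$, both of which carry $\dev\mun(h)$ into $\dev\mun(hgg')$. By Lemma \ref{dev_pts}(v) their $\tilde\rho_{hgg'}$-distance is bounded uniformly over the source fibre; the uniform equivalence of $\tilde\rho_h$ and $\tilde\sigma_h$ (with ratio bounds independent of $h$) upgrades this to a uniform bound in the intrinsic symmetric metric $\tilde\sigma_{hgg'}$. The decisive input is then the classical fact that on a Gromov-hyperbolic space, two bi-Lipschitz maps at bounded sup-distance induce the same homeomorphism of the boundary at infinity, since the boundary extension depends only on the asymptotic class of the image of a (quasi-)geodesic ray. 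Applying this fibrewise gives $\partial\Phi(gg')=\partial\Phi(g')\circ\partial\Phi(g)$.

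The main obstacle is (i): without some form of hyperbolicity of the fibres, two bi-Lipschitz maps at bounded distance may well induce distinct boundary maps (consider translations in Euclidean space). The whole argument therefore relies essentially on the real rank one hypothesis of the present section, which makes each fibre negatively curved and permits the identification of boundary extensions from bounded-distance data; this is precisely why Theorem \ref{main1_thm} is proved separately in the rank one and the general cases.
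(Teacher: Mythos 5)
Your proof is correct and takes precisely the paper's approach: the paper justifies this lemma by a one-line citation of Lemma \ref{dev_pts}(ii) and (v), and you have simply unfolded that, using (ii) for the equivariance and (v) (together with the uniform equivalence of $\tilde\rho_h$ and $\tilde\sigma_h$, established just before Lemma \ref{action_lem}) plus the boundedness-implies-same-boundary-map principle for the cocycle identity.

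One caution regarding your closing remark. The Euclidean example is misleading: translations of $\RR^n$ fix the visual boundary pointwise, so they do \emph{not} exhibit two bi-Lipschitz maps at bounded distance with distinct boundary maps --- they agree with the identity on $\partial\RR^n$. More to the point, in any CAT(0) space, once two maps both extend continuously to the cone-topology boundary, being at bounded sup-distance forces the extensions to coincide (if $x_n\to\xi$ and $d(x_n,y_n)\le C$, then $y_n\to\xi$ by convexity of the distance function). Where hyperbolicity genuinely enters is one step earlier: the Morse lemma is what guarantees that a bi-Lipschitz self-map of $X$ extends to the ideal boundary at all, since in higher rank quasi-geodesics need not track geodesics and a bi-Lipschitz map may have no well-defined boundary extension. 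This is also why the higher-rank sections of the paper dispense with $\partial\Phi$ altogether and invoke Kleiner--Leeb directly on $\Phi_h^{hg}$.
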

This follows at once from Lemma \ref{dev_pts} (ii) and (v).

  {One thus has a genuine
  set-theoretic right $G$-action on $\partial\widetilde{\mathcal{F}}$,
commuting with the left $\pi_1M$-action.}
Informally, thinking of $\partial\widetilde\F$ as a boundary for $\widetilde M$,
 one now seeks for an extension of this $G$-action into a $G$-action on the interior $\widetilde M$.
 
 For every $g\in G$, define $\Omega_g\subset G$ as the set of the $h\in G$
 such that there exists a $\tilde\sigma$-\emph{isometry}
 \[\Psi_h^{hg} : \dev\mun(h) \to \dev\mun(hg)\] inducing $\partial\Phi_h^{hg}$
  on the ideal boundaries at infinity. Notice that if such an isometry exists, then it is unique.
  Put $$\Omega:=\{(x,g)\in\widetilde M\times G\ /\ \dev(x)\in\Omega_g\}$$
  We get global mappings
  \[\Psi(g):\dev\mun(\Omega_g)\to\widetilde M:x\mapsto\Psi_{\dev(x)}^{\dev(x)g}(x),\]
  \[\Psi:\Omega\to\widetilde M:(x,g)\mapsto\Psi_{\dev(x)}^{\dev(x)g}(x).\]

After Lemma \ref{action_lem}, obviously:
  \begin{lem}\label{psi_lem}
  For every $g, g'\in G$ and $\gamma\in\pi_1M$,
  \begin{enumerate} 

   \item $\Omega_{g\mun}=\Omega_gg$;
   \item $\Psi(g\mun)=\Psi(g)\mun$ on $\dev\mun(\Omega_{g\mun})$;
   \item $\Omega_g\cap\Omega_{g'}g\mun\subset\Omega_{gg'}$;
   \item $\Psi(gg')=\Psi(g')\circ\Psi(g)$ on $\dev\mun(\Omega_g\cap\Omega_{g'} g\mun)$;
   \item $\Omega_{{{\hol}(\gamma)}g}={\hol(\gamma)}\Omega_g$;
   \item $\Psi(\hol(\gamma) g)=\gamma\circ\Psi(g)$ on $\dev\mun(\Omega_{\hol(\gamma)g})$.
  \end{enumerate}
\end{lem}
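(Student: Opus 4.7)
The plan is to prove all six items by a single two-step method applied to candidate isometries built from the ones already assumed to exist. In each case, one first exhibits a candidate isometry between the two relevant fibres, obtained by inversion (for (i)--(ii)), composition (for (iii)--(iv)), or conjugation by $\gamma$ (for (v)--(vi)). One then verifies, using Lemma \ref{action_lem}, that this candidate induces on the ideal boundaries the boundary homeomorphism $\partial\Phi$ prescribed by the definition of $\Omega$. Uniqueness, which is built into the definition of $\Omega$ and holds because each fibre is a rank-one symmetric space of dimension at least three, then identifies the candidate with the $\Psi$ required in each item, yielding simultaneously the $\Omega$-membership and the $\Psi$-identity.

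Concretely: for (i)--(ii), the inverse $(\Psi_h^{hg})\mun$ is an isometry $\dev\mun(hg) \to \dev\mun(h)$ whose boundary extension is $(\partial\Phi_h^{hg})\mun = \partial\Phi_{hg}^{h}$, the latter equality coming from Lemma \ref{action_lem}(i) applied to the pair $g, g\mun$. For (iii)--(iv), if $h \in \Omega_g$ and $hg \in \Omega_{g'}$ then the composite $\Psi_{hg}^{hgg'} \circ \Psi_h^{hg}$ is an isometry $\dev\mun(h) \to \dev\mun(hgg')$ whose boundary extension is $\partial\Phi_{hg}^{hgg'} \circ \partial\Phi_h^{hg} = \partial\Phi_h^{hgg'}$, again by Lemma \ref{action_lem}(i). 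For (v)--(vi), if $h \in \Omega_g$ then $\gamma \circ \Psi_h^{hg} \circ \gamma\mun$ is an isometry $\dev\mun(\hol(\gamma) h) \to \dev\mun(\hol(\gamma) h g)$, and Lemma \ref{action_lem}(ii) gives at once the equality $\gamma \circ \partial\Phi_h^{hg} \circ \gamma\mun = \partial\Phi_{\hol(\gamma) h}^{\hol(\gamma) h g}$ of its boundary extension with the one prescribed by the definition of $\Omega$ on this pair of fibres.

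The only step that is more than bookkeeping is the uniqueness of an isometry of a fibre prescribed on the sphere at infinity. This rigidity input is where the geometric hypothesis on the leaves enters in an essential way, and without it the two-step method would break down. Once uniqueness is granted, each of (i)--(vi) reduces to a direct verification on the boundary using Lemma \ref{action_lem}; the main care required is the bookkeeping of the left $\pi_1 M$-action versus the right $G$-quasi-action and the resulting identification of source and target fibres in each item.
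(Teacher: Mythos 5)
Your proof is correct and fills in exactly what the paper leaves implicit. The paper's entire argument is the phrase ``After Lemma \ref{action_lem}, obviously,'' and the content you supply --- exhibit a candidate isometry by inversion, composition, or conjugation, check that its boundary extension agrees with $\partial\Phi$ via Lemma \ref{action_lem}, then invoke the uniqueness of an isometry of a negatively curved symmetric space with a prescribed boundary map (the uniqueness the paper records just after defining $\Omega_g$) --- is precisely the reasoning the authors intend. Two small remarks. First, in items (i)--(ii) you should note explicitly that $\partial\Phi(e_G)=\id$ (which holds because $\Phi(e_G)=\id$ by construction), so that Lemma \ref{action_lem}(i) with $g'=g\mun$ really yields $\partial\Phi(g\mun)=\partial\Phi(g)\mun$. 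Second, for items (v)--(vi) what your argument actually establishes is that $\Omega_g$ is invariant under left translation by $\hol(\gamma)$ and that $\Psi(g)$ commutes with the $\pi_1M$-action, i.e.\ $\hol(\gamma)\Omega_g=\Omega_g$ and $\gamma\circ\Psi(g)=\Psi(g)\circ\gamma$ on $\dev\mun(\Omega_g)$; the formulas printed in the paper, $\Omega_{\hol(\gamma)g}=\hol(\gamma)\Omega_g$ and $\Psi(\hol(\gamma)g)=\gamma\circ\Psi(g)$, do not type-check (the two sides land in fibres over $\dev(x)\hol(\gamma)g$ and $\hol(\gamma)\dev(x)g$ respectively) and appear to be a misprint for the equivariance statements you proved, which are also the ones used afterwards to pass $\Psi(g)$ to the quotient $M$.
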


\begin{lem}
\label{continuity_lem}\

(i) The subset $\Omega$ is closed in $\widetilde M\times G$. 

(ii) The map $\Psi$ is continuous on $\Omega$.
\end{lem}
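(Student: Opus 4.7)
My plan is to reduce everything to a Morse-type stability for rank-one symmetric spaces: two quasi-isometries $X\to X$ inducing the same homeomorphism of $\partial X$ are within bounded distance, the bound depending only on the quasi-isometry constants. I apply this to $\Phi_h^{hg}$ and $\Psi_h^{hg}$, both regarded as maps between copies of $X$ via $\tilde\sigma$: they induce the same boundary homeomorphism by the very definition of $\Omega$, so I obtain
\[\vert\Psi_h^{hg}(x)-\Phi_h^{hg}(x)\vert_{\tilde\sigma_{hg}}\le C(g)\]
uniformly in $h\in G$ and $x\in\dev\mun(h)$. By Lemma \ref{dev_pts}(iii) together with the global equivalence of $\tilde\sigma$ and $\tilde\rho$ on the fibres, the $\tilde\sigma$-bi-Lipschitz ratio of $\Phi_h^{hg}$ is locally bounded in $g\in G$, so $C(g)$ may be chosen locally bounded.

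With this bound, I treat (i) and (ii) jointly. Take $(x_n,g_n)\in\Omega$ with $(x_n,g_n)\to(x,g)$, and set $h_n=\dev(x_n)$, $h=\dev(x)$. By Lemma \ref{dev_pts}(iv), $\Phi(g_n)(x_n)\to\Phi(g)(x)$ in $\widetilde M$; combined with the stability bound and the $\tilde\sigma$-$\tilde\rho$ equivalence, this confines the points $y_n:=\Psi_{h_n}^{h_ng_n}(x_n)$ to a compact region of $\widetilde M$. I then fix pointed isometric identifications of the fibres over $h_n$ and $h_ng_n$ with $X$, arranged to depend continuously on the footpoint and to converge, as $n\to\infty$, to identifications of the fibres over $h$ and $hg$. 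Through these identifications the isometries $\Psi_{h_n}^{h_ng_n}$ transport to a sequence $f_n\in H=\Isom(X)$ with bounded displacement on a fixed base point. By properness of the $H$-action on $X$, a subsequence $f_{n_k}$ converges in $H$ to some $\psi^*$, and transporting back yields a subsequential limit isometry $\Psi^*\colon\dev\mun(h)\to\dev\mun(hg)$.

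The crux will be to identify $\Psi^*$ with $\Psi_h^{hg}$: it suffices to show that $\Psi^*$ induces $\partial\Phi_h^{hg}$ on $\partial\dev\mun(h)$, for then $(x,g)\in\Omega$, and uniqueness in the definition of $\Omega$ forces $\Psi^*=\Psi_h^{hg}$; since every subsequential limit is then the same, the whole sequence $y_n$ converges to $\Psi(g)(x)$, giving both (i) and (ii) at once. For this boundary identification, convergence of isometries in $H$ entails uniform convergence of their boundary extensions on $\partial X$, while continuity of $\dev$ and of $\Phi$ in both arguments (Lemma \ref{dev_pts}(iv)), combined with uniform Morse stability along the convergent family, should force the boundary maps $\partial\Phi_{h_n}^{h_ng_n}$ to converge in the chosen identifications to $\partial\Phi_h^{hg}$. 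I expect this last step to be the main obstacle, as it requires coordinating the identifications of the ideal boundaries of distinct fibres along the convergent footpoint; once it is in hand, the conclusion is formal.
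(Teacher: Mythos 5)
Your approach is essentially the paper's: apply a quantitative Morse lemma (Shchur) to bound $\lVert\Phi_{h}^{hg}-\Psi_{h}^{hg}\rVert_{\tilde\sigma}$ uniformly over compacta in $g$, then extract a subsequential limit isometry by a compactness argument (you use properness of $\Isom(X)$ acting on $X$; the paper uses Ascoli--Arzel\`a with the continuity of $\tilde\sigma$, which amounts to the same thing), and finally identify the limit with $\Psi_h^{hg}$.

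The one genuine gap is the step you yourself flag: you try to identify the limit $\Psi^*$ by showing $\partial\Phi_{h_n}^{h_ng_n}\to\partial\Phi_{h}^{hg}$ through your identifications and then passing this to $\partial\Psi_{h_n}^{h_ng_n}$. That is the wrong place to work; coordinating boundary maps of distinct fibres is exactly the awkwardness you should avoid, and it is not addressed. The paper's route never touches the boundary: since $\Psi_{h_n}^{h_ng_n}$ is within $C(g_n)$ of $\Phi_{h_n}^{h_ng_n}$ and $\Phi_{h_n}^{h_ng_n}(x_n)=\Phi(g_n)(x_n)\to\Phi(g)(x)=\Phi_h^{hg}(x)$ by construction of $\Phi$ (and likewise at other base points), any subsequential limit isometry $\Psi^*$ is at \emph{finite} distance from $\Phi_h^{hg}$ in the limit fibre. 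For a negatively curved symmetric space, an isometry at finite distance from a quasi-isometry necessarily induces the same boundary homeomorphism; therefore $\partial\Psi^*=\partial\Phi_h^{hg}$ \emph{automatically}, with no convergence of boundary maps to be arranged. From there $(x,g)\in\Omega$ and, by uniqueness in the definition of $\Omega$, $\Psi^*=\Psi_h^{hg}$; since every subsequential limit agrees, $y_n\to\Psi(g)(x)$, giving both (i) and (ii). So your setup already contains all the needed estimates; replace the final boundary-tracking step with the elementary ``bounded distance $\Rightarrow$ same boundary extension'' observation and the proof closes.
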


\begin{proof}
The proof essentially falls to Shchur's
 qualitative version of the Morse lemma  (\cite[Theorem 3]{Shchur}).
 It applies to every symmetric space $X$
 of negative curvature. After Shchur:
 \emph{if $\varphi : X \to X$ is an $(L,C)$-quasi-isometry and if
 $\psi : X \to X$ is an isometry at finite distance from $\varphi$, then we have
\begin{equation}\label{bd_eq}
\| \varphi(x) - \psi(x) \|_{X} < D,    
\end{equation}
where $D$ is a constant which depends only on $X$, $L$ and $C$.}

Consider a sequence $(x_i,g_i)\in\Omega$
converging in $\widetilde M\times G$ to $(x,g)\in\widetilde M\times G$. 
Put $h_i:=\dev(x_i)$ and $h:=\dev(x)$.

(i)  After Lemma \ref{dev_pts} (i) and (iii), there is a $L>0$
such that for every $i$, the diffeomorphism
 $\Phi_{h_i}^{h_ig_i}$
 between the fibres $\dev\mun(h_i)$ and $\dev\mun(h_ig_i)$
  is $L$-bi-lipschitz with respect to the metrics
   $\tilde\sigma_{h_i}$ and  $\tilde\sigma_{h_ig_i}$.
After the Morse-Shchur lemma,
$$\Vert\Phi_{h_i}^{h_ig_i}-\Psi_{h_i}^{h_ig_i}
\Vert_{\tilde\sigma_{h_ig_i}}$$
has a bound $D$ not depending on $i$. Then,
the continuity of $\tilde\sigma$ and the  Ascoli-Arzela theorem provide a subsequence
of $\Psi_{h_i}^{h_ig_i}$ converging to an isometry $$\Psi_h^{hg}:
\dev\mun(h)\to\dev\mun(hg)$$
at  distance at most $D$ from $\Phi_h^{hg}$; hence $(x,g)\in\Omega$.

(ii) It is enough to verify that if moreover
$(x,g)\in\Omega$, then after passing to some subsequence,
 $\Psi(x_i,g_i)$ converges to $\Psi(x,g)$.
Just as before, applying
 Lemma \ref{dev_pts} (i) and (iii),
 the Morse-Shchur lemma, and the Ascoli-Arzela theorem, one concludes that
 after passing to some subsequence,
$\Psi_{h_i}^{h_ig_i}$ converges to an isometry $$\psi:\dev\mun(h)\to\dev\mun(hg)$$
at finite distance from $\Phi_h^{hg}$. In particular, $\psi=\Psi_h^{hg}$; and thus
$\Psi_{h_i}^{h_ig_i}(x_i)$ converges to $\Psi_h^{hg}(x)$.
\end{proof}

\begin{lem}
\label{isometry_lem}\

(i) $\Omega=\widetilde M\times G$.
 
(ii) The Lie group $G$ is unimodular.
\end{lem}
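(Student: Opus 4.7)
\emph{Plan and Part (i).} My strategy is to derive (i) from a density-and-closedness argument using minimality of $\F$, and (ii) from a volume-rescaling computation on the compact quotient. For (i): the identity of each fiber is trivially a $\tilde\sigma$-isometry inducing the identity on the ideal boundary, so $\Omega_{e_G}=G$. Taking $g=e_G$ in Lemma \ref{psi_lem}(v) then gives $\Omega_{\hol(\gamma)}=\hol(\gamma)G=G$ for every $\gamma\in\pi_1M$. Setting $S:=\{g\in G:\Omega_g=G\}$, this shows $S\supseteq\hol(\pi_1M)$, which is dense in $G$ by minimality of $\F$. On the other hand $S$ is closed by Lemma \ref{continuity_lem}(i) and the surjectivity of $\dev$: if $g_i\in S$ converges to $g$, then for every $x\in\widetilde M$ the points $(x,g_i)\in\Omega$ have limit $(x,g)\in\Omega$, whence $\dev(x)\in\Omega_g$; as $x$ is arbitrary and $\dev$ is surjective, $\Omega_g=G$. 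Hence $S=G$, which is (i).

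\emph{Part (ii).} With (i) in hand, Lemma \ref{psi_lem}(iv) makes $\Psi$ a genuine right action of $G$ on $\widetilde M$ by homeomorphisms sending each $\dev\mun(h)$ $\tilde\sigma$-isometrically onto $\dev\mun(hg)$. The $\pi_1M$-equivariance of the $\Phi_h^{hg}$ given by Lemma \ref{dev_pts}(ii), combined with the uniqueness of the $\tilde\sigma$-isometric lift $\Psi_h^{hg}$ of $\partial\Phi_h^{hg}$, forces $\Psi(g)\circ\gamma=\gamma\circ\Psi(g)$ for every $\gamma\in\pi_1M$, so each $\Psi(g)$ descends to a homeomorphism $\bar\Psi(g)$ of the compact manifold $M$. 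I would then introduce on $\widetilde M$ the Borel measure $\nu$ obtained by coupling the leafwise $\tilde\sigma$-volume with the transverse pullback $\dev^*\eta$ of the left-invariant Haar measure $\eta$ of $G$: this $\nu$ is comparable to the smooth $\tilde\rho$-volume, is $\pi_1M$-invariant (leafwise because $\gamma$ is a leafwise isometry; transversely because $\dev\circ\gamma=L_{\hol(\gamma)}\circ\dev$ and $\eta$ is left-invariant), and therefore descends to a finite Borel measure $\nu_M$ on $M$. Now let $\alpha:G\to\RR_+^*$ be the modular function, $R_g^*\eta=\alpha(g)\eta$. Since $\Psi(g)$ preserves leafwise $\tilde\sigma$-volume and satisfies $\dev\circ\Psi(g)=R_g\circ\dev$, a product computation gives $\Psi(g)^*\nu=\alpha(g)\nu$, whence $\bar\Psi(g)^*\nu_M=\alpha(g)\nu_M$. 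Because $\bar\Psi(g)$ is a homeomorphism of the finite-measure space $(M,\nu_M)$, the rescaling factor must be $1$, so $\alpha\equiv 1$, i.e.\ $G$ is unimodular.

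\emph{Main obstacle.} The principal subtlety is regularity: $\Psi(g)$ is only known to be continuous on $\widetilde M$ (though leafwise smooth and fiberwise isometric), so the identity $\Psi(g)^*\nu=\alpha(g)\nu$ must be interpreted as an equality of Borel measures rather than of differential forms. This is legitimate because $\Psi(g)$ is a homeomorphism respecting the fibration $\dev$ and isometric on each fiber for the continuous metric $\tilde\sigma$; the argument only requires the pushforward of a finite Borel measure under a homeomorphism of a compact space.
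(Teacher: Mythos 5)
There is a genuine gap in your proof of part (i), and it is not the regularity issue you flag at the end. Your argument hinges on deducing $\Omega_{\hol(\gamma)}=\hol(\gamma)\Omega_{e_G}=G$ from Lemma \ref{psi_lem}(v). But the $\gamma$-equivariance available from Lemmas \ref{dev_pts}(ii) and \ref{action_lem}(ii) yields only that each $\Omega_g$ is invariant under \emph{left} translation by $\hol(\gamma)$: if $h\in\Omega_g$ then $\gamma\circ\Psi_h^{hg}\circ\gamma^{-1}$ is a $\tilde\sigma$-isometry $\dev\mun(\hol(\gamma)h)\to\dev\mun(\hol(\gamma)hg)$ inducing $\partial\Phi_{\hol(\gamma)h}^{\hol(\gamma)hg}$, whence $\hol(\gamma)\Omega_g=\Omega_g$. (The printed statement $\Omega_{\hol(\gamma)g}=\hol(\gamma)\Omega_g$, and likewise item (vi), appear to be misstated; note also that the paper's proof of Lemma \ref{isometry_lem} cites items (vii) and (viii) of Lemma \ref{psi_lem}, which do not exist as numbered.) Under the corrected reading, setting $g=e_G$ gives the vacuous $G=G$. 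Under the literal reading, you would be claiming that for every $h$ there exists a $\tilde\sigma$-isometry $\dev\mun(h)\to\dev\mun(h\hol(\gamma))$ inducing $\partial\Phi_h^{h\hol(\gamma)}$; but the only isometry in sight, $\gamma$ itself, has the wrong target $\dev\mun(\hol(\gamma)h)$, and no soft argument produces such a map. Closedness of $\Omega$ plus density of $\hol(\pi_1M)$ is therefore of no help: the actual difficulty, entirely bypassed in your plan, is showing that $\Omega_g$ is nonempty at all for $g\ne e_G$.

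That is where the rigidity input enters, and it is why the paper proves (i) and (ii) \emph{simultaneously} rather than sequentially. The paper constructs, for each $g$, the fibrewise Furstenberg barycentre map $\Psi_h^{hg}$ determined by $\partial\Phi_h^{hg}$, invokes Besson--Courtois--Gallot (using $n=\dim X\ge 3$) to get $|\operatorname{Jac}\Psi_h^{hg}|\le 1$, and then runs a disintegration/volume count on the compact quotient: finiteness of the total measure forces \emph{both} $c(g)=1$ (unimodularity) \emph{and} $|\operatorname{Jac}|=1$ almost everywhere, and BCG rigidity then upgrades the latter to $\Psi_h^{hg}$ being an isometry for a.e.\ $h$, so $\Omega_g$ has full measure; closedness finishes (i). Your argument for (ii), \emph{granting} (i), is correct and is essentially a streamlined version of that volume count, made simpler because (i) already supplies leafwise Jacobian $1$ everywhere. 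But (ii) cannot be obtained without the barycentre construction that underlies (i), so as written the proposal has no way to get started.
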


\begin{proof}
One proves (i) and (ii) in the same time, applying
the barycentre method of Furstenberg,
Douady-Earle and Besson-Courtois-Gallot \cite{BCG} in our foliated situation. There are two steps: constructing the barycentre map,
and proving that this map is fibrewise isometric a.e. The hypothesis
that the dimension $n$ of $X$ is at least $3$
 will be essential here.

 Fix $g\in G$; consider the right translation
  $R_{g}$ mapping every $h\in G$ to $hg\in G$,
   and the positive real constant $c(g)$ such that  $(R_{g})_{*}\eta = c(g)\eta$.   
We shall prove that $c(g)=1$ and that $\Omega_g$ is of full measure in $G$,
which  because of Lemma \ref{continuity_lem} (i) implies that $\Omega_g=G$.
After Lemma \ref{psi_lem} (i),
 changing if necessary $g$ to $g\mun$, one can assume that $c(g)\le 1$.
   
\medbreak
\emph{{Step 1.}\ Constructing the barycentre map ---}
 For every $h\in G$, every $x\in\dev\mun(h)$ defines a visual probability
 Borelian measure $\mu_x$ on $\partial\dev\mun(h)$, and thus an image Borel measure \[(\partial\Phi_h^{hg})_*(\mu_x)\] on $\partial\dev\mun(hg)$, which has no atoms
 since $\partial\Phi_h^{hg}$ is a homeomorphism between the boundaries;
 one defines \[\Psi_h^{hg}(x)\in\dev\mun(hg)\] as the Furstenberg
  barycentre of this image measure. 
  One  thus gets a globally Borelian, fibrewise 
   barycentre map \[\Psi(g):\widetilde M\to\widetilde M\]
which after \cite[Sections 3 through 5]{BCG}, and since $n\ge 3$,
 is of class $C^1$ \emph{in every fibre}
 $\dev\mun(h)$; and $\vert \operatorname{Jac}(\Psi_h^{hg})\vert\le 1$ in the fibre
  with respect to $\tilde\sigma_h$ and to $\tilde\sigma_{hg}$. 
\medbreak
\emph{{Step 2.}\ The barycentre map is fibrewise isometric ---}
Consider on every fibre $\dev\mun(h)$ the $\tilde\sigma_h$-volume measure $v_h$,
and on $\widetilde{M}$ the positive Borel measure
\[\tilde\mu:=\int_G v_hd\eta(h).\]
One has
the disintegration formula
\[\Psi(g)_*(\tilde\mu)=c(g)\int_G\vert \operatorname{Jac}(\Psi_h^{hg})\vert v_hd\eta(h).\]

On the other hand, the Haar measure $\eta$ being left-invariant and the fibrewise
metric $\tilde\sigma$ being $\pi_1M$-invariant, $\tilde\mu$ is $\pi_1M$-invariant.
 After Lemma \ref{psi_lem} (vii) and (viii), $\Psi(g)$ passes to the quotient,
 and one gets a Borelian mapping \[\psi(g) : M \to M,\] which is of class $C^1$
 in every leaf and satisfies $\vert \operatorname{Jac}(\psi(g))\vert\le 1$
 with respect to $\sigma$ in every leaf.

In case the vector bundle $T\F$ over $M$ is not orientable,
consider over $M$ the $2$-sheeted cover $M'$ orienting the pull back of $T\F$;
otherwise, put $M'=M$.
We restrict $\tilde\mu$ to any Borelian fundamental domain for the action of $\pi_1M'$
on $\widetilde{M}$, and we project the restricted measure into $M'$. We thus get on $M'$
a positive Borel measure $\mu$ such that
  \[\int_{M'}\mu=\int_{M'}\psi(g)_{*}(\mu)=c(g)\int_{M'}\vert \operatorname{Jac}(\psi(g))\vert d\mu.\]

Since $\mu$ is finite, $c(g)=1$, and the identity
 $\vert \operatorname{Jac}(\psi(g))\vert=1$ hold
  $\mu$-almost everywhere on $M$. By Fubini's theorem, there is a subset of full measure $\Omega_g\subset G$
 such that for every $h\in\Omega_g$, the identity
   $\vert \operatorname{Jac}(\Psi_h^{hg})\vert=1$
 holds $\tilde\sigma$-almost everywhere on $\dev\mun(h)$, hence everywhere on $\dev\mun(h)$ by continuity.
   After \cite[Proposition 5.2]{BCG}, 
  since $n\ge 3$, this equality implies that $\Psi_h^{hg}$ is actually an isometry
  between the two fibres $\dev\mun(h)$ and $\dev\mun(hg)$; and one has $\partial\Psi_h^{hg}=\partial\Phi_h^{hg}$.
 \end{proof}

 After Lemmas \ref{psi_lem} and \ref{continuity_lem}, one gets a continuous right action $$(x,g)\mapsto\Psi(g)(x)$$ of $G$ on $\widetilde{M}$, lifting the right action of
 $G$ on itself,
  commuting with the left action of
 $\pi_1M$ on $\widetilde{M}$, and $\tilde\sigma$-isometric in the fibres of $\dev$.
  One also writes for short $x*g$ instead
 of $\Psi(g)(x)$.

Such a right $G$-action can alternatively be regarded as
 a ($C^0$) conjugation between $(M,\F)$ and a homogeneous Lie foliation $(M_0,\F_0)$. Namely, identify isometrically the fibre $\dev\mun(e_G)$ with $X$.
  One has a $C^0$ global trivialisation of $\widetilde M$ as an $X$-bundle over $G$:
\[
\tilde c : \widetilde M\to X\times G ; \,\, x\mapsto(x*\dev(x)\mun,\dev(x)).
\]
Also, the left $\pi_1M$-action and the right $G$-action on $\widetilde M$
 provide a representation 
$$ r : \pi_1M \to\Isom(X) $$
$$ r(\gamma):X \to X:x \mapsto(\gamma\cdot x)*\dev(\gamma\cdot x)\mun$$     
Put for short $H:=\Isom(X)$; consider the product representation
 \[r\times\hol:\pi_1M\to H\times G.\]
 Let 
 \begin{align*}
 \Gamma & :=(r\times\hol)(\pi_1M)\subset H\times G, \\
 M_0 & :=\Gamma\backslash(X\times G).
 \end{align*}
 Let $\F_0$ be the foliation of $M_0$ parallel to $X$, and let $c:M\to M_0$
 be the quotient of $\tilde c$.
 Since $\pi_1M$ acts freely on $\widetilde{M}$ and  $\tilde c$ is a $\pi_1M$-equivariant
 homeomorphism, clearly $r\times\hol$ is one-to-one and $c$ is a homeomorphism.
  In particular,
  $M_{0}$ is compact, $\Gamma$ is a uniform lattice in $H\times G$,
   isomorphic to $\pi_1M$, and  $\F_0$ is a homogeneous $G$-Lie foliation whose leaves are the 
   orbits of the $H$-action.  By construction, the leaves of $\F$ are the preimages of the leaves
 of $\F_0$ through $c$. Since $\G$ acts freely on $X \times G$, the condition $\G \cap hKh^{-1} = \{1\}$ in \eqref{eq_mfd} holds. 

{The map $c$ conjugates $\F$ to $\F_0$,
and its restriction to each leaf of $\F$ is an isometry.
However, $c$ is not globally smooth in general.
In the next lemma, we perturb $c$ to get a smooth conjugation which may not be isometric in the leaves.}

\begin{lem}\label{smooth_lem} $\F$ is \emph{smoothly} conjugate to $\F_0$.
\end{lem}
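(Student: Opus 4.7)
\emph{Proof plan.} I plan to construct a smooth leaf-preserving diffeomorphism $c' : M \to M_0$, which will then be a smooth conjugation of $\F$ with $\F_0$, by perturbing $c$ in a way that sacrifices leafwise isometry for transverse regularity.

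The first observation is that the transverse part of $c$ in local foliated charts is \emph{already smooth}. Pick small foliated charts $\phi : U \to V \times W$ of $\F$ and $\phi' : U' \to V' \times W'$ of $\F_0$ with $c(U)\subset U'$; because $c$ sends plaques to plaques, in coordinates it reads $(v,g)\mapsto(f(v,g),\psi(g))$ for a local homeomorphism $\psi : W \to W'\subset G$. Since $c$ intertwines the local transverse holonomies, which on both sides are restrictions to $W$ of left translations by elements of the dense subgroup $\hol(\pi_1M)\subset G$, the relation $\psi(\gamma g)=\gamma\psi(g)$ holds whenever both sides are defined. Fixing $g_0\in W$ and letting $g$ range over a neighborhood of $g_0$ with $gg_0^{-1}\in\hol(\pi_1M)$ (a dense condition), the continuity of $\psi$ forces $\psi$ to equal right translation by the constant $g_0^{-1}\psi(g_0)\in G$ on all of $W$; in particular $\psi$ is smooth.

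It remains to perturb the leafwise component $f$. For each fixed $g$, the map $f(\cdot,g):V\to V'$ is a smooth isometry between open subsets of a leaf locally isometric to $X$; since every such local isometry extends uniquely to an element of $\Isom(X)$, the assignment $g\mapsto f(\cdot,g)$ factors through a continuous map $W\to\Isom(X)$, a finite-dimensional smooth manifold. Approximating this by a smooth map in the $C^0$-topology produces a smooth local perturbation $\tilde f$ of $f$, so that $(\tilde f,\psi)$ is a smooth local conjugation $C^1$-close to $c$. To globalize, I work on the universal cover $\widetilde M$, where the lift $\tilde c=(\alpha,\dev):\widetilde M\to X\times G$ is $\pi_1M$-equivariant and $\dev$ is already smooth; a $\pi_1M$-invariant smooth partition of unity lifted from the compact quotient $M$, together with the Cartan barycenter on the nonpositively-curved target $X$ (in the spirit of Besson--Courtois--Gallot), allows me to take canonical weighted averages of local smooth approximations of $\alpha$, yielding a smooth $\pi_1M$-equivariant map $\alpha'$ close to $\alpha$ and still fibrewise a diffeomorphism. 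The pair $(\alpha',\dev)$ then descends to the required $c':M\to M_0$.

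The main obstacle is this global patching. Foliated chart transitions mix leaf and transverse coordinates via $(v,g)\mapsto(f_{ij}(v,g),\gamma_{ij}g)$, so a naive partition-of-unity combination directly on $M$ would have no canonical target for averaging and could destroy both the fibrewise-diffeomorphism property and the $\pi_1M$-equivariance. Passing to $\widetilde M$ and exploiting the Cartan barycenter on the globally defined symmetric space $X$ as the averaging operation in the target circumvents both difficulties and yields the desired smooth conjugation.
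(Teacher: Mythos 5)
Your proposal reaches the same conclusion but by a genuinely different route from the paper, and the route is sound in outline.

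The paper's proof starts from the same two regularity observations you make (the transverse component is $C^\infty$; the leafwise component varies continuously through $\Isom(X)$, so $c$ is $C^{\infty,0}$), and then smooths $c$ by a Thurston general-position triangulation $T$ of $M$, an induction over the skeleta of $T$, and local convolution of the leafwise component in the \emph{transverse} variable, patched by a cutoff relative to the cell boundary. Your plan replaces the triangulation and mollification with a single global averaging step: lift everything to $\widetilde M$, cover $M$ by finitely many foliated charts, lift a subordinate partition of unity $\pi_1M$-equivariantly, choose on each (lifted, translated) chart a smooth approximation of the leafwise component $\alpha$ via a $C^0$-close smooth map into the finite-dimensional Lie group $\Isom(X)$, and glue by the Cartan barycentre of the weighted finite collection of values in $X$. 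Equivariance under $r:\pi_1M\to\Isom(X)$ is automatic because the barycentre is natural under isometries, and smoothness is automatic because the barycentre of a finite weighted set of points depends smoothly on the points and weights. This is conceptually cleaner than the inductive skeleton argument and avoids invoking Thurston's general-position theorem.

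What your sketch should make explicit, because it is the one point where the argument can fail if treated carelessly: you need $\alpha'$ to remain a \emph{fibrewise diffeomorphism onto} $X$ on the noncompact fibres $\dev^{-1}(h)$, not merely a smooth map. The correct mechanism is: (1) since the $C^0$ and $C^\infty$ topologies agree on $\Isom(X)$, your local approximations are automatically $C^\infty$-close to the isometries they approximate, uniformly over the compact base $M$; (2) the barycentre of a weighted finite family of points all within $\epsilon$ of one another, with weights coming from a \emph{fixed} smooth partition of unity, is $C^1$-close to any one of them, the extra term coming from the derivatives of the weights being $O(\epsilon)$ because the spread of the points is $O(\epsilon)$; (3) a map of $X$ to itself uniformly $C^1$-close to an isometry is a diffeomorphism, which holds because $X$ is a homogeneous Hadamard manifold (so the required uniform injectivity and surjectivity estimates are available). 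Without step (3) spelled out, the claim ``still fibrewise a diffeomorphism'' is exactly the kind of assertion that is true but must be justified: $C^1$-closeness to a diffeomorphism on a noncompact manifold does not by itself give a diffeomorphism. With that supplied, your argument works and is an acceptable alternative to the paper's triangulation-and-mollification proof.
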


\begin{proof} One uses the notation $\nb(x)$
meaning an arbitrary open neighborhood of $x$.
 The conjugating homeomorphism $c:M\to M_0$
is not only continuous. Indeed, put $n:=\dim(\F)$ and $q:=\codim(\F)$.
 Recall that a smooth local coordinates system
$x_1, \dots, x_n$, $y_1, \dots, y_q$ in $M$, abbreviated as $(x,y)$,
is called $\F$-\emph{distinguished} if $\partial/\partial x_1$,
\dots, $\partial/\partial x_n$ span $T\F$. The conjugation means that 
 for every $x\in M$ and every two distinguished smooth local coordinates systems
 $(x,y)$ on $\nb(x)\subset M$ and $(x',y')$ on $\nb(c(x))\subset M_0$,
  on $\nb(x)$ the map $c$
 has the form
 \begin{equation}\label{conjugation_eqn}
 (x,y)\mapsto(f(x,y),g(y)).
 \end{equation}

We claim:
\begin{enumerate}[label=(\alph*)]
\item $c$ is transversely $C^\infty$, which means that $g$ is $C^\infty$;
\item $c$ is globally $C^{\infty,0}$, which means
  that for
every integers $r\ge 0$ and $$1\le j_1,\dots, j_r\le n$$
the leafwise partial derivate $$\partial^rf/\partial x_{j_1}\dots\partial x_{j_r}$$
 exists and
is continuous.
\end{enumerate}

For, (a) follows from the very construction of $c$;
 while (b) follows at once from the following facts:
  $c$ is continuous on $M$; and $c$ is
 smooth and
 isometric in the leaves;
and on the space of
  the isometries of $X$, the
  $C^0$ topology coincides with the $C^\infty$ topology.
  
  It is known that such a conjugation between two $C^\infty$ foliations is
  $C^0$-closely approximable by a $C^\infty$ conjugation. However,
  having not found any
  reference in the litterature, we give a proof.
   Indeed, we shall
  smoothen the conjugation by composition with
  a $C^0$ isotopy in $M_0$, tangential to $\F_0$ and $C^0$-close to the identity.

 After W.\ Thurston, $M$ admits an arbitrarily fine, smooth triangulation $T$ in
  \emph{general position} with respect to $\F$ (see \cite{Thurston}).
   We choose $T$ so fine that for every
  cell $\alpha$ of $T$,
  the image $c(\alpha)$ lies in the domain $U'$ of an $\F_0$-distinguished
 coordinates system $(x',y')$. Let $d:=\dim(\alpha)$.
  By induction, we can assume that $c$
   already smoothly embeds some open neighborhood $V$ of
    the $(d-1)$-skeleton of $T$, into $M_0$.
    We shall modify $c$ on a small neighborhood of $\alpha$ relatively
    to a small neighborhood of $\partial\alpha$, and obtain
  a conjugation $\tilde c$ still transversely $C^\infty$
  and globally $C^{\infty,0}$, moreover smooth on $\nb(\alpha)$.
    Write $\I$ for the compact interval
   $[-1,+1]$.
   The general position
   implies
  that one has a compact neighborhood $N\cong\I^{n+q}$ of $\alpha$ in $M$
  such that
  
  \begin{enumerate}
  \item On $N\cong\I^n\times\I^{q}$, the foliation $\F$
  is the slice foliation parallel to $\I^n\times 0$;
  \item On $N\cong\I^n\times\I^{q}$, the map
  $c$ has the form
  (\ref{conjugation_eqn})
  with respect to the standard coordinates $(x,y)$
   on $\I^n\times\I^{q}$ and to the coordinates $(x',y')$ on $U'$.
  \end{enumerate}
  
   Then, the regularities (a) and (b) hold on $N$; the map $g$ is a
  smooth embedding of of $\I^q$ into $\R^q$; and for every $y\in\I^q$, the map
  $f_y:x\mapsto f(x,y)$ is a smooth embedding of $\I^n$ into $\R^n$. Since $f$
  is already smooth on $\nb(\partial\alpha)$, after changing the 
  smooth coordinates system $(x',y')$ on $\nb(c(\alpha))$, we can moreover arrange that
  $\partial f/\partial y=0$ on $\nb(\partial\alpha)$. To fix ideas, we
  can also make
  $g(y)=y$ on $\I^q$.
  
Fix a compact neighborhood $K$ of $\alpha$ interior to $N$.
  On a small neighborhood of $K$ in $N$, following a standard smoothing method,
   we approximate $f$ by a sequence of
  smooth functions $f_k:=f*_y\rho_k$ obtained as the convolution product of $f$,
  with respect to the variable $y$, with a sequence
  of smooth nonnegative bump functions $y\mapsto\rho_k(y)$ whose support is the ball
  of centre $0$ and radius $1/k$ in $\R^q$ and whose integral is $1$.
  Note that for $k$ large enough, one has
   $f_k=f$ on $\nb(\partial\alpha)$.
  Since $f$ is continuous, one has $f_k\to f$ uniformly on $K$.
  By (b), one has $\partial f_k/\partial x\to\partial f/\partial x$ uniformly on $K$.
  Hence, for $k$ large enough, and for every $y\in\I^q$, the map
  $f_k$ induces a smooth embedding of
  $\operatorname{Int}(K)\cap(\I^n\times y)$ into $\R^n$.
  Let $(\phi,1-\phi)$ be a smooth partition of the unity on $N$ subordinate to
  the open cover $(\operatorname{Int}(K),N\setminus\alpha)$. For $k$ large enough,
  $\tilde f_k:=(1-\phi)f+\phi f_k$ induces, for every $y\in\I^q$,
   a smooth embedding of $\I^n\times y$ into $\R^n$. Fix such an $n$.
  The map $\tilde c:M\to M_0$ coinciding with $c$ on $M\setminus K$ and with
  $(x,y)\mapsto(\tilde f_k(x,y),y)$ in $N$ works.
  \end{proof}

Theorem~\ref{main1_thm} is proved, for $X$ of real rank one.
{Note that in this case $(M,\F)$ itself is smoothly conjugate to a homogeneous foliation and we do not need to take a finite covering (See Remark \ref{fincov_rms} below).}

\begin{rms}
By a theorem of Pansu \cite{Pansu}, any quasi-isometry from $\bH_{\HH}^{d}$ to itself or $\bH_{\OO}^{d}$ to itself can be uniquely approximated by an isometry up to a bounded error. For the case where $X= \bH_{\HH}^{d}$ or $\bH_{\OO}^{d}$,
one could 
use this result to construct the right $G$-action, in place of barycentre maps.
\end{rms}

\section{Rigidity of Lie foliations with leaves locally isometric to an irreducible symmetric space of higher rank}\label{rank_2_sec}
{Let us prove Theorem~\ref{main1_thm} in the irreducible cases of rank $\ge 2$.}
Let as before $G$ be a simply
 connected Lie group, and $(M,\F)$ be a closed connected manifold with a minimal $G$-Lie foliation. Assume that $\F$ admits a $C^{0,\infty}$ metric whose restriction to every leaf is locally isometric to an irreducible symmetric space
 $X$ of non-compact type of real rank at least $2$. 
 In that case, the proof of Theorem~\ref{main1_thm} follows the same general lines
 as it did in the hyperbolic case, the main difference being that the construction
 of the right action $\Psi$ and the proof of its continuity
  now fall to {the Kleiner-Leeb rigidity theorem}
 instead of the barycentre method. Here are a few precisions; the other details
 are left to the reader.

\begin{lem}
\label{continuity2_lem} There is a {continuous}
 right action of the Lie group $G$
on the manifold $\wt{M}$, denoted $(x,g)\mapsto\Psi(g)(x)$,
commuting with the left action of $\pi_1(M)$, and such that
 $\Psi (g)$ sends isometrically $\dev\mun(h)$ onto $\dev\mun(hg)$,  for every $g, h\in G$.
\end{lem}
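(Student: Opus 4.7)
The plan is to transcribe the argument of Section \ref{hyp_sec} with the Besson-Courtois-Gallot barycentre construction replaced throughout by the Kleiner-Leeb quasi-isometric rigidity theorem \cite{KL}. Recall that Kleiner-Leeb asserts the following for $X$ an irreducible symmetric space of non-compact type of rank $\ge 2$: every $(L, C)$-quasi-isometry $X \to X$ is at $C^{0}$-distance at most $D(L, C, X)$ from a \emph{unique} isometry of $X$. The irreducibility is crucial here: it ensures that no scaling factor or permutation of de Rham factors can arise in the Kleiner-Leeb approximant, so that the approximant will be a genuine isometry rather than a mere homothety.

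By Lemma \ref{dev_pts}(i) and (iii), for every $g, h \in G$ the fibrewise map $\Phi_h^{hg}:\dev\mun(h)\to\dev\mun(hg)$ is an $(L(g), C(g))$-quasi-isometry between two copies of $X$, with constants depending only on $g$ and bounded on compact subsets of $G$. I would apply Kleiner-Leeb to obtain a unique isometry $\Psi_h^{hg}:\dev\mun(h)\to\dev\mun(hg)$ at $\tilde\sigma$-distance at most $D(g)$ from $\Phi_h^{hg}$, and then define $\Psi(g)(x):=\Psi_{\dev(x)}^{\dev(x)g}(x)$ for $x\in\widetilde M$. The remaining verifications should go through essentially verbatim from the rank-one case: continuity of $(x,g)\mapsto\Psi(g)(x)$ on $\widetilde M\times G$ would follow by the Ascoli-Arzela argument of Lemma \ref{continuity_lem}, with the Kleiner-Leeb bound replacing the Morse-Shchur bound; the cocycle identity $\Psi(gg')=\Psi(g')\circ\Psi(g)$ and the $\pi_1M$-equivariance $\Psi(g)(\gamma\cdot x)=\gamma\cdot\Psi(g)(x)$ would follow from Lemma \ref{dev_pts}(v) and (ii) respectively, combined with uniqueness in Kleiner-Leeb --- in each case, both sides are isometries between the same two fibres at uniformly bounded distance from the same quasi-isometry, hence must coincide.

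The main contrast with the rank-one argument is that one would not need any analogue of the Besson-Courtois-Gallot Jacobian analysis of Lemma \ref{isometry_lem} to upgrade an a priori distance-decreasing map to an isometry: in the irreducible higher-rank case, Kleiner-Leeb directly delivers an isometry. The only point requiring care will be the uniformity of the Kleiner-Leeb constant $D(g)$ on compact subsets of $G$, which should follow from the uniformity of the quasi-isometry constants (Lemma \ref{dev_pts}(iii)) together with the standard dependence of the Kleiner-Leeb constant on $(L, C)$; this uniformity is precisely what the Ascoli-Arzela step requires.
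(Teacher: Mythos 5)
Your proposal follows essentially the same route as the paper: apply Kleiner--Leeb to the fibrewise bi-Lipschitz maps $\Phi_h^{hg}$ to obtain unique isometric approximants $\Psi_h^{hg}$, deduce the cocycle identity and the $\pi_1M$-equivariance from the uniqueness statement, and carry over the Ascoli--Arzel\`a continuity argument from the rank-one case. One small imprecision: you claim that irreducibility alone forces the Kleiner--Leeb approximant to be an isometry rather than a homothety, whereas \cite[Theorem~1.1.3]{KL} produces a homothety and the paper separately invokes the classical fact (citing \cite{Ochiai}) that a homothety of a non-flat irreducible symmetric space of non-compact type onto itself is necessarily an isometry --- this does not affect the validity of your argument, but the extra ingredient should be named.
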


\begin{proof} 
Recall from Lemma \ref{dev_pts} (iii) that $\Phi_h^{hg}$ is a bi-Lipschitz diffeomorphism between the fibres $\dev\mun(h)$ and $\dev\mun(hg)$, whose
 Lipschitz ratio is uniformly bounded with respect to $g$ varying in any fixed compact subset of $G$. Both fibres being Riemannian symmetric space of non-compact type of real rank $\geq 2$, by \cite[Theorem~1.1.3]{KL} and the fact that any homothety of $X$ is an isometry (see, e.g., \cite{Ochiai}), there exists a unique isometry 
 \[
 \Psi_h^{hg} : \dev^{-1} (h) \to \dev^{-1} (hg)
 \]
 at finite distance from $\Phi_h^{hg}$; and this distance is bounded uniformly
  with respect to $g$ varying in any fixed compact subset of $G$. Define
  $\Psi (g)$ as the self-mapping of $\wt{M}$ whose restriction to each fibre
   $\dev^{-1} (h)$ is $\Psi_h^{hg}$.

Since, in restriction to each fibre, both $\Psi (h)\Psi (g)$ and $\Psi(gh)$ are isometries
 at finite distance from $\Phi (gh)$,
one has $\Psi(gh)=\Psi(h)\Psi(g)$
 by the uniqueness of such an isometry. For the same reason, $\Psi (g)\Psi (g^{-1})$ is the identity, and $\Psi(g)$ commutes with the left action of $\pi_1M$ on $\widetilde M$.

The continuity of $\Psi (g)(x)$ with respect to $(x,g)\in\wt{M}\times G$
 follows from the same argument as in the proof of Lemma \ref{continuity_lem},
 the differences being that $\Psi(g)$ is already defined all over $\wt{M}$,
  and that the Kleiner-Leeb theorem now assumes the role that the Morse-Schuhr lemma played
  in the hyperbolic case.
\end{proof}

The rest of the proof is the same as in the case where the leaves were
of real rank one, in the last section: 
one constructs a right $G$-action, a homogeneous Lie foliation $(M_{0},\F_{0})$ and a homeomorphism $(M,\F) \to (M_{0}, \F_{0})$ of class $C^{\infty,0}$, which can be approximated by a diffeomorphism $(M,\F) \to (M_{0}, \F_{0})$. Theorem \ref{main1_thm} is proved in the case
where the  leaves are irreducible and of real rank at least $2$.

\section{Rigidity of Lie foliations with locally symmetric leaves}\label{gen_sec}

{Let us prove Theorem \ref{main1_thm} in the general case.} 
Consider now
the general case: $$X = \prod_{i=1}^{\ell} X_{i},$$ where each $X_{i}$ is,
 for $i=1, \dots, \ell$, an irreducible Riemannian symmetric space of non-compact type, 
 of real dimension at least $3$. 

 The proof of Theorem \ref{main1_thm} in that general context will essentially 
 follow the lines of the
 proofs given above
 in the irreducible cases, but also asks for some specific arguments that we point out.
 \medbreak
Preliminaries --- It is convenient, to fix ideas,
to arrange that for every $1\le i, j\le\ell$,
either $X_i=X_j$
 or $X_i$ and $X_j$ are not isometric. After
 de Rham's unicity of the factorisation of $X$, its group of isometries splits as the
 semidirect product
 $$\Isom(X)\cong S_X\ltimes\prod_{i=1}^\ell\Isom(X_i),$$
 where $S_X\subset\mathfrak{S}_\ell$ is the finite group of the permutations $\tau$
 such that $X_i=X_{\tau(i)}$ for every $1\le i\le\ell$.
 Write $p_i:X\to X_i$ for the $i$-th projection.

 We can assume that $G$ is simply connected.
  By a \emph{fibrewise Riemannian bundle} over $G$, we mean a
  $C^0$ locally trivial fibre bundle $\pi:E\to G$ together with,
   on every fibre
  $\pi\mun(h)$,
  a smooth structure and a smooth Riemannian metric,
  globally continuous with respect to $h$.
  Clearly, the Whitney product of any two fibrewise
   Riemannian bundles
  over $G$ gives a fibrewise Riemannian bundle over $G$.
 
 Let $M$, $\F$, $\sigma$ be as in Theorem \ref{main1_thm}; 
 as in Section \ref{hyp_sec}, let $\tilde\sigma_h$
 be the lift of $\sigma$ in $\pi\mun(h)$.
 Hence, $(\widetilde M,\tilde\sigma)$ is a fibrewise Riemannian
 bundle over $G$.

\medbreak
 Step 1 --- Claim: there is a unique factorisation of
 $(\widetilde M,\tilde\sigma)$ as a Whitney product
  $$(\widetilde M,\tilde\sigma)=\prod_{i=1}^\ell(\widetilde M_i,\tilde\sigma_i),$$
 such that each factor $(\widetilde M_i,\tilde\sigma_i)$ is
 a fibrewise Riemannian bundle over $G$ whose fibres are
  isometric to $X_i$.
  
  Indeed, fix basepoints $x_i$ in $X_i$, and $x=(x_1,\dots,x_\ell)$ in $X$;
 write $\Isom(X_i,x_i)\subset \Isom(X_i)$ (resp.\ $\Isom(X,x)\subset \Isom(X)$) for the subgroup fixing $x_i$ (resp.\ $x$); put
  $$H_i:=\Isom(X_i,x_i)\times\prod_{j\neq i} \Isom(X_j),$$
 \begin{align*}
H:= & \prod_{i=1}^\ell \Isom(X_i), & 
K:= & \prod_{i=1}^\ell \Isom(X_i, x_i).
 \end{align*}
 
 The metric $\sigma$ being continuous,
 the bundle $\widetilde M$ is associated to a
 principal $\Isom(X)$-bundle of class $C^0$ over $G$.
 The base $G$ being simply connected, the corresponding principal $S_X$-bundle is trivial; hence,
  $\widetilde M$ is also associated to a principal $H$-bundle $P$ of class $C^0$ over $G$. 
  Since $H/K=X$ splits as the product of the factors $H/H_i=X_i$,
  it follows that $\widetilde M\cong P/K$ does split as the Whitney product, over the base $G$, of the fibrewise Riemannian bundles
  $\widetilde M_i:=P/H_i$ of fibre $X_i$, for $i=1,\dots,\ell$. The unicity of
  this factorisation of $\widetilde M$ is obvious from de Rham's unicity of the factorisation of $X$.

\medbreak
 Step 2 --- The action of $\pi_1M$ on $\widetilde M$ also somehow splits, as follows.
 
  Write  $q_{i}$ (resp.\ $\dev_i$) for the projection
  $\wt{M} \to \wt{M}_{i}$ (resp.\ $\widetilde M_i\to G$).
Since $\pi_{1}M$ acting on $\wt{M}$ permutes isometrically the fibres of $\dev$,
  in view of the unicity of the above factorization of $\widetilde M$,
   the action of $\pi_{1}M$ also
  permutes the factors $\wt{M}_{1}, \dots, \wt{M}_{\ell}$,
  in the sense that there are a group homomorphism $$\alpha : \pi_{1}M \to S_X$$ 
  and, for every $\gamma \in \pi_{1}M$ and each $1\le i\le\ell$,
  a homeomorphism $$\gamma_i:\widetilde M_i\to\widetilde M_{\alpha(\gamma)(i)}$$
  that maps isometrically every fibre $\dev_i\mun(h)$ onto the fibre
   $$\dev_{\alpha(\gamma)(i)}\mun(\gamma h)$$ and
  such that $$\gamma_i\circ q_i=q_{\alpha(\gamma)(i)}\circ\gamma$$
  Let $\Sigma=\ker \alpha$, a finite index subgroup in $\pi_{1}M$. 
   One thus obtains, on each factor $\widetilde M_i$, an action $(\gamma,x)\mapsto
   \gamma_i(x)$ of $\Sigma$
  which is obviously cocompact, in the sense that $\widetilde M_i$ contains
  a compact subset that meets every orbit. However,
  this action needs of course not be discrete for $\ell\ge 2$.

\medbreak Step 3 ---
The right quasi-action $\Phi$ of $\pi_1M$ on $\widetilde M$ also somehow splits, as follows.

 After Kleiner-Leeb \cite[Theorem~1.1.2]{KL} (see also Kap\-ov\-ich-Klein\-er-Leeb \cite{KKL}), for every quasi-isometry $\phi$ of $X$, there exist
a permutation $\per(\phi)\in \mathfrak{S}_\ell$,
and for each $i$ a quasi-isometry $\phi_{i}$
of $X_i$ into $X_{\per(\phi)(i)}$, such that $p_{\per(\phi)(i)} \circ \phi$ equals $\phi_{i} \circ p_{i}$ up to a bounded error. Clearly, the permutation $\per(\phi)$ is uniquely determined
by $\phi$; and two quasi-isometries of $X$ lying at finite maximal distance from each other
induce the same permutation.

 For every $h, k \in G$, recall from Lemma \ref{dev_pts}
 the bi-Lipschitz self-dif\-feom\-orphism
 $\Phi_h^k$ between the fibres $\dev\mun(h)$ and $\dev\mun(k)$.
  The Kleiner-Leeb theorem
 applied to  $\Phi_h^{k}$ provides a permutation $per(h,k)\in \mathfrak{S}_\ell$,
and for each $i$ a quasi-isometry $$\phi_i(h,k):\dev_i\mun(h)\to\dev_{per(h,k)(i)}\mun(k)$$ such that
 $q_{per(h,k)(i)} \circ \Phi_h^{k}$ equals $\phi_i(h,k)\circ q_{i}$ up to a bounded error.

 We claim that the permutation $per(h,k)$ is actually the identity for every $h, k\in G$.
 First, since for every $g, h, k\in G$
 the compose $\Phi_g^h\circ\Phi_h^k$ lies at finite distance from $\Phi_g^k$
 (Lemma \ref{dev_pts}, (v)), the uniqueness of the  Kleiner-Leeb permutation forces
 the identity $$\per(g,h)\circ\per(h,k)=\per(g,k)$$

 Then, $G$ being arcwise connected, it is enough to verify that every $h\in G$ has
 a neighborhood in $G$ in which $\per(h,\cdot)$ is the identity. By contradiction,
 if not, one would have in $G$ a sequence $(h_m)$ converging to $h$ for
 $m\to+\infty$, such that
 $\per(h,h_m)$ is a constant permutation $\tau\neq\id$. Fix a $1\le j\le\ell$ such that
 $\tau(j)\neq j$.
 On the other hand, since the Lipschitz ratios of the sequence $\Phi_h^{h_m}$
 are bounded (Lemma \ref{dev_pts}, (iii)), after Kleiner-Leeb the maximal distance between
 $\Phi_h^{h_m}$ and the product $\prod_i\phi_i(h,h_m)$ also has an upper bound $D$
 independant of $m$. In particular,
  in the fibre
 $\dev\mun(h_m)$ endowed with the metric $\tilde\sigma_{h_m}$,
  the image $\Phi_h^{h_m}(\dev_j\mun(h_m))$ lies in the $D$-neighborhood of the factor $\dev_{\tau(j)}\mun(h_m)$. Recall from
 Lemma \ref{dev_pts} (i) that for $m\to+\infty$, 
 the embeddings $\Phi_h^{h_m}$ of the fibre $\dev\mun(h)$ converge
 to the identity in $\widetilde M$. Hence,
  in the fibre $\dev\mun(h)$ endowed with the metric $\tilde\sigma_h$,
  the factor $\dev_j\mun(h)$ also lies in
 the $D$-neighborhood of the other factor $\dev_{\tau(j)}\mun(h)$, a contradiction.
 The claim is proved.

 For every $g\in G$ and each $1\le i\le\ell$, define $\Phi(g)_i:\widetilde M_i\to\tilde M_i$ as
 the collection of the $\phi_i(h,hg)$'s.

\medbreak
  Step 4 ---  For each
     $i$ such that the real rank of $X_i$ is $1$, 
     much as in
    the proof of Lemma \ref{isometry_lem}, following \cite{BCG},
     we consider the barycentre map $\Psi_{i}(g) : \wt{M}_{i} \to \wt{M}_{i}$ determined by $\Phi(g)_{i}$.  This $\Psi_{i}(g)$ does not expand the tangential volume form,
     according to \cite{BCG}. 
 \medbreak

Step 5 --- 
For each 
 $i$ such that the real rank of $X_i$ is at least $2$,
 much as in the proof of Lemma \ref{continuity2_lem},
 following
  \cite[Theorem~1.1.3]{KL}, there is a homeomorphism $\Psi_{i}(g)$ of $\wt{M}_{i}$
  which is
    at finite distance from $\Phi(g)_{i}$ over every compact subset of $G$,
    and which maps isometrically every fibre $\dev\mun(h)$ onto $\dev\mun(hg)$.
     Since $\Psi_{i}(g)$ is a fibrewise isometry, it preserves the tangential volume form of $\wt{M}_{i}$.
    \medbreak

 Step 6 ---   Let $\Psi(g) : \wt{M} \to \wt{M}$ be the Whitney product  over $G$
      of the maps $\Psi_{i}(g)$, for
     $1\le i\le\ell$. 
     {By the construction in the last step, $\Psi(g)$ does not expand the tangential volume of $\wt{\F}$.}
     Much as in Lemmas \ref{isometry_lem} and \ref{continuity_lem},
     but using the $\Sigma$-action on $\wt{M}$ instead of the $\pi_{1}M$-action, one shows
      that $G$ is unimodular, and that $\Psi(g)$ coincides,
       on almost every fibre of the developing map,
        with a fibrewise isometry defined on the whole $\wt{M}$.
        
\medbreak
         The end of the proof of Theorem \ref{main1_thm} is similar to the previous cases.

\begin{rms}\label{fincov_rms}
    {As the proof suggests, if $\Sigma = \pi_1M$, then $(M,\F)$ itself is smoothly conjugate to a homogeneous foliation: no finite
    covering is necessary. For example, this condition is satisfied if $X$ is irreducible, or more generally if no pair of factors of $X$ are mutually homothetic.}
\end{rms}

\section{{Quasi-isometric rigidity of Lie foliations}}\label{qi_sec}

{Let us prove Theorem \ref{main2_thm}.}
Assume that $X$ is a product of symmetric spaces of non-compact type which are neither $\bH^d_{\RR}$  ($d\ge 1$)
 nor $\bH^d_{\CC}$ ($d\ge 1$),
and that the universal covers of the leaves of $\F$,
or equivalently the fibres of $\dev$, are quasi-isometric to $X$. We apply theorems of Kleiner-Leeb and Pansu.

 First, fix quasi-isometries $\alpha : X \to\dev\mun(e_G)$ and $\beta :
 \dev\mun(e_G)\to X$ which are quasi-inverse to each other.
 Let $\operatorname{QI}(X)$ be the group of quasi-isometries of $X$. Define the map $$\Xi : \pi_{1}M \to \operatorname{QI}(X)$$ $$\Xi(\gamma)(x) =  \beta (\gamma^{-1} \cdot (\Phi(\gamma) \circ \alpha(x))),$$ which is a homomorphism up to a bounded error. By theorems of Kleiner-Leeb \cite{KL} and Pansu \cite{Pansu}, for each $\gamma \in \pi_{1}M$, there exists $\Psi(\gamma) \in \Isom (X)$ at a finite distance from $\Xi$. Thus,
  we have a homomorphism $$\Psi : \pi_{1}M \to \Isom (X).$$
  {As in the last section, up to a finite covering, we can assume that $\Psi$ maps each factor of $X$ onto itself.} Take a global section $\sigma : M \to X \times_{\Psi} M$, where $X \times_{\Psi} M$ is the quotient of $X \times \widetilde{M}$ by the $\pi_1M$-action given by $\gamma \cdot (x,z) = (\psi(\gamma)(x), \gamma \cdot z)$. Let $\tilde{\sigma} : \widetilde{M} \to X$ be its lift to $\wt{M}$. Now the image of $\Psi \times \hol : \pi_{1}M \to \Isom (X) \times G$ is a uniform lattice $\Gamma$. We can see that $\tilde{\sigma } \times \dev : \wt{M} \to X \times G$ induces a map $M \to  \Gamma \backslash (X \times G)$ such that $\F$ is the pull back of the homogeneous foliation on $\Gamma \backslash (X \times G)$.

\section{Homogeneous Lie foliations}

We prove the following structure result for homogeneous Lie foliations with locally symmetric leaves, which says that a finite covering of the foliation is obtained by certain suspension construction. The argument is similar to Zimmer's in \cite[Sections 5 and 10]{Zimmer}, although we use Auslander's theorem for uniform lattices instead of Zimmer's superrigidity for cocycles.
\begin{prop}\label{lochom_prop}
Let\begin{itemize}
\item $H$ be a semisimple Lie group with trivial centre, finitely many connected components and without nontrivial connected compact normal subgroup;
\item $K\subset H$ be a maximal compact subgroup;
\item $G$ be a simply-connected Lie group;
\item $\Gamma\subset H \times G$ be a uniform lattice such that $\Gamma\cap hKh\mun=\{1\}$ for every $h\in H$.
\end{itemize}
Consider,
as in Example \ref{hom_ex},
 the homogeneous $G$-foliation $\F$ on $\Gamma \backslash (H \times G) / K$. Assume that $\F$ is minimal, in other words, the projection of $\Gamma$ to $G$ is dense. Then, one of the following holds:
\begin{enumerate}
\item (Suspension whose fibres are homogeneous Lie foliations with irreducible lattices) $G$ is non-compact semisimple. $X$ is decomposed into a product of two symmetric spaces $X = X' \times X''$, and we have an irreducible uniform lattice $\G''$ of $\Isom(X'') \times G$ such that a finite covering of $(M,\F)$ is a flat fibre bundle over a locally symmetric space locally isometric to $X'$ whose fibres are homogeneous $G$-Lie foliations on $\G'' \backslash (X'' \times G)$ with simply-connected leaves.
\item (Example \ref{susp_ex}, suspension Lie foliations) There exist a uniform lattice $\G'$ of $H$ and a compact quotient group $G'$ of $G$, of the same dimension, such that a finite covering of $(M,\F)$ is diffeomorphic to the suspension $G'$-Lie foliation on a $G'$-bundle over a compact locally symmetric space $\G' \backslash H/K$. If moreover $H$ has no connected Lie subgroup locally isomorphic to neither of $\PSO(n,1)$ nor $\PU(n,1)$, then $G'$ is semisimple.
\end{enumerate}
\end{prop}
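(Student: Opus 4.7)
The plan is to analyze the uniform lattice $\Gamma \subset H \times G$ by combining Auslander's theorem, applied to the solvable radical of $G$, with the classical decomposition of lattices in semisimple Lie groups into irreducible factors, and then to read off the desired structure of $(M,\F)$ case by case.

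\textbf{Step 1 (Auslander reduction).} Let $R$ be the solvable radical of $G$ and fix a Levi decomposition $G = S \ltimes R$, with $S$ semisimple. Since $H$ is semisimple, the solvable radical of $H \times G$ is $\{e_H\} \times R$. Auslander's theorem, applied to the uniform lattice $\Gamma$ in $H \times G$, gives that $\Gamma \cdot (\{e_H\} \times R)$ is closed in $H \times G$, so the image $\bar\Gamma$ of $\Gamma$ in the Levi quotient $H \times S$ is a uniform discrete subgroup and $\Gamma \cap (\{e_H\} \times R)$ is a lattice in $R$. After a finite-index reduction, the classical theorem on irreducible decompositions of lattices in semisimple Lie groups yields an almost-direct product $\bar\Gamma = \prod_j \bar\Gamma_j$, where the simple factors of $H \times S$ are partitioned into blocks $L_j$ and each $\bar\Gamma_j$ is an irreducible uniform lattice in $L_j$. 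The minimality of $\F$ (density of the projection of $\Gamma$ into $G$) forces all simple factors of $S$ to lie in a single block.

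\textbf{Step 2 (Case where $S$ has a noncompact factor).} Let $L_0 = H'' \times S$ be the block containing $S$, and write $H = H' \times H''$ accordingly. Then $\bar\Gamma \cong \bar\Gamma' \times \bar\Gamma''$, with $\bar\Gamma' \subset H'$ a uniform lattice and $\bar\Gamma'' \subset H'' \times S$ irreducible. Lifting through $H \times G \to H \times S$, using that $\Gamma \cap R$ is a cocompact lattice in $R$ normalised by the dense image in $G$, one recovers a splitting $\Gamma \cong \bar\Gamma' \times \Gamma''$ with $\Gamma'' \subset H'' \times G$ an irreducible uniform lattice. A finite cover of $(M,\F)$ then realises as the flat bundle over $\bar\Gamma' \backslash H'/K'$ whose fibres are homogeneous $G$-Lie foliations on $\Gamma'' \backslash (X'' \times G)$, where $X' = H'/K'$ and $X'' = H''/K''$; this gives case (1).

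\textbf{Step 3 (Case where $S$ is trivial or compact).} Here, after absorbing compact Levi factors, $\bar\Gamma$ projects to a uniform lattice $\Gamma' \subset H$. The kernel $\Gamma_G := \Gamma \cap (\{e_H\} \times G)$ is a discrete cocompact normal subgroup of $\Gamma$, and is centralised by the dense subgroup of $G$ coming from the projection of $\Gamma$; hence $\Gamma_G \subset Z(G)$. Consequently $G' := G/\Gamma_G$ is a compact Lie group of the same dimension as $G$, and a finite cover of $(M,\F)$ identifies with the suspension of the induced representation $\Gamma' \to G'$ over the locally symmetric space $\Gamma' \backslash H/K$; this is case (2). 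For the supplementary claim, if $H$ has no simple factor locally isomorphic to $\PSO(n,1)$ or $\PU(n,1)$, then by Margulis's theorem every irreducible factor of $\Gamma'$ has finite abelianisation, so no dense homomorphism $\Gamma' \to T$ to a nontrivial torus exists, forcing $G'$ to be semisimple.

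The main technical obstacle will be the bookkeeping in Step~2: verifying that, after the finite-index reductions required by the Auslander decomposition, the lifted lattice $\Gamma''$ is genuinely irreducible in the \emph{full} group $H'' \times G$ rather than merely in $H'' \times S$, and that the resulting flat bundle structure faithfully represents $(M,\F)$ up to finite covering. The density of the $G$-projection of $\Gamma''$ must be deduced from density of the $S$-projection together with cocompactness of $\Gamma'' \cap R$ in $R$, which is where the uniformity of $\Gamma$ as a lattice, not just its discreteness, is essential.
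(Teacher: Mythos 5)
Your approach is genuinely different from the paper's. You apply Auslander's theorem upfront to the radical $R$ of $G$, pass to the image $\bar\Gamma$ in $H\times S$, and invoke the irreducible decomposition of lattices in semisimple groups; the paper instead sets up its dichotomy according to whether $p_H(\Gamma)$ is discrete, and in the non-discrete case splits $H$ as $F\times H'$ where $F$ is the identity component of $\overline{p_H(\Gamma)}$ (Auslander entering only later, applied to $J=CR$, in the narrower task of showing $G$ is semisimple). Both routes aim at the same flat-bundle picture, and your Step~3 (the suspension case) tracks the paper's case (ii) closely: centrality of $\Gamma\cap G$, Selberg's lemma, and Margulis's finiteness of abelianisation.

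However, Step~1 contains a genuine gap. The claim that minimality --- density of $p_G(\Gamma)$, hence of $p_S(\bar\Gamma)$ --- forces all simple factors of $S$ into a \emph{single} irreducible block is neither justified nor correct. If $S=S_1\times S_2$ and $\bar\Gamma$ is (up to finite index) a product of irreducible lattices in blocks $L_1\supset S_1$ and $L_2\supset S_2$, each block containing in addition a noncompact factor of $H$, then each $p_{S_i}(\bar\Gamma_i)$ is dense and so is $p_S(\bar\Gamma)$, even though $S$ is spread across two blocks. Minimality alone rules out a block consisting of a noncompact $S_i$ alone, but not the multi-block configuration. Without this claim, ``$L_0=H''\times S$'' in Step~2 is not well defined, and you do not obtain an irreducible lattice in $\Isom(X'')\times G$ as required by conclusion (i).

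Two further points that need to be filled in. First, Step~2 does not prove that $G$ is noncompact semisimple, which is part of the asserted conclusion; the paper establishes this by applying Auslander to $J=CR$ and invoking Margulis's theorem (\cite[IX.6.18(B)]{Margulis}) for the finiteness of $\Gamma/[\Gamma,\Gamma]$, concluding that the compact solvable quotient $R_1$ is finite. Second, the lifting of a splitting of $\bar\Gamma$ to a splitting of $\Gamma$, which you flag as the ``main technical obstacle,'' is a real issue: $\Gamma$ is a priori only an extension of $\bar\Gamma$ by $\Gamma\cap R$, and such extensions need not split, even up to finite index. The paper avoids this by working directly with $\Gamma\cap(F\times G)$ as a subgroup of $\Gamma$, rather than trying to split $\Gamma$ as a product.
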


\begin{proof}
For any quotient group $N$ of $H \times G$, denote the canonical projection $H \times G \to N$ by $p_{N}$. The intersection $\Gamma \cap G$, being normal in $\Gamma$,
is also normal in $p_{G}(\Gamma)$, which is dense in $G$ by assumption;
hence, $\Gamma \cap G$ is normal in $G$; and hence central in $G$,
since $\Gamma \cap G$ is discrete in $G$. 

First, let us consider the case where $p_{H}(\Gamma)$ is discrete
in $H$, and show (ii). By Selberg's lemma \cite{Selberg}, $p_{H}(\Gamma)$ has a torsion-free finite index subgroup. Thus, by replacing $(M,\F)$ with a finite covering, we can assume that $p_{H}(\Gamma)$ is torsion-free. Then, we can regard $\G \backslash (H \times G)$ as the total space of a fibre bundle whose base is $V:=p_{H}(\Gamma) \backslash H/K$ and whose fibre is $G' := (\G \cap G) \backslash G$. This total space being compact,
 both $V$ and $G'$ are compact. Put $\G' := (\Gamma \cap G) \backslash \G$. It is immediate that $\F$ is diffeomorphic to the suspension foliation of the homomorphism $\pi_1V \to\G' \to G'$ over $V$.
 
 Let us show the latter sentence of (ii). Assume further that $H$ has no normal subgroup isomorphic to $\PSO(n,1)$ or $\PU(n,1)$. After~\cite[Assertion IX.6.18 (B)]{Margulis}, which follows from results of Deligne \cite{Deligne}, Raghunathan \cite{Raghunathan} and Margulis' arithmeticity theorem, $\Gamma/[\Gamma,\Gamma]$ is finite. So, the image of any homomorphism $\Gamma \to S^{1}$ is finite. If $G'$ had a nontrivial torus direct summand $T$, then $p_{T}(\G)$ could not be dense in $T$. Hence, $G'$ is semisimple.

Let us now consider the case where $p_{H}(\Gamma)$ is not discrete, and show (i).
 Let $F\subset H$ be the identity component of the closure
  $\overline{p_{H}(\Gamma)}$. Since $\overline{p_{H}(\Gamma)}$ is a Zariski-dense Lie subgroup of $H$, the algebra $\Lie(F)$ is invariant under the adjoint action of $H$;
in other words, $F$ is normal in $H$. Being moreover semisimple and centre-free, $H$ splits as the direct product of $F$ by a proper Lie subgroup $H'\subset H$. By construction, $p_{H'}(\Gamma)$ is discrete in $H'$.
Regard $\Gamma \backslash (H \times G)$ as the total space of a fibre bundle whose base is
$p_{H'}(\Gamma) \backslash H'$ and whose fibre is
 \[(\Gamma \cap(F\times G)) \backslash (F\times G)\]
The total space being compact,
the base and the fibre are compact.
Since moreover $p_{F}(\G)$ is dense in $F$,
the group $\Gamma \cap (F \times G)$ is an irreducible uniform lattice in $F \times G$.
 Also, by the same argument as the first paragraph of the present proof,
 $\G \cap F$ is central in $F$. Since $H$ is centre-free, $F$ is also
centre-free, hence $\Gamma \cap F= \{e\}$. 
Let us show that $G$ is semisimple. Let $R$ be the maximal normal solvable subgroup of $G$. Note that $R$ may not be connected. By the Levi decomposition, we can decompose $G$ as $G=L \ltimes R$, where $L$ is a connected semisimple Lie subgroup of $G$. Since $F$ is centre-free, $R$ is the maximal normal solvable subgroup of $F \times G$. Let $C$ be the compact part of $L$ and $J = CR$. By Auslander's theorem~\cite[Theorem~1 (iii)]{Auslander}, $\Gamma \cap J$ is a uniform lattice of $J$. Recall that $\Gamma \cap G$ is contained in the centre of $G$. Let $G_{1} = (\Gamma \cap J)\backslash G$ and $R_{1} = (\Gamma \cap J)\backslash R$.  Since $R_{1}$ is solvable and compact, its identity component $(R_{1})_{0}$ is abelian. Then the kernel of $R_{1} \to G/[G,G]$ is finite. On the other hand, since $\G$ is a lattice of $F \times L$ and both $F$ and $L$ are non-compact and semisimple, by \cite[Assertion IX.6.18 (B)]{Margulis}, $\G/[\G,\G]$ is finite. Then $R_{1}$ is finite, which implies that $G$ is semisimple.

Let $X' = H'/K'$, where $K' = H \cap K$, and decompose $X$ as $X = X' \times X''$. As in the proof of the case (ii), by replacing $(M,\F)$ with a finite covering, we can assume that $p_{H'}(\Gamma)$ is torsion-free. Then $\pi : \Gamma \backslash (X \times G) \to p_{H'}(\Gamma) \backslash X'$ is a {flat} fibre bundle. Let $\G'' = \G \cap (F \times G)$. The fibre of this bundle is $\G'' \backslash (X'' \times G)$ which admits a $G$-Lie foliation $\mathcal{G}$ whose developing map is the projection $X'' \times G \to G$. By $\Gamma \cap F= \{e\}$, the leaves of $\mathcal{G}$ is simply-connected. 
\end{proof}

We will combine Proposition \ref{dic_cor2} with Theorem \ref{main1_thm} to prove Corollaries \ref{dic_cor}, \ref{z_cor} and \ref{codim_cor} stated in Section \ref{results_sec}. First let us show the following dichotomy in the case where $X$ is irreducible, which is a precise version of Corollary \ref{dic_cor}:

\begin{cor}\label{dic_cor2}
Let $X$ be an \emph{irreducible} symmetric space of non-compact type of
real dimension $n\ge 3$.
 Let $(M,\F)$ be a minimal Lie foliation on a closed manifold. Assume that $\F$ 
bears a $C^{0}$ metric whose restriction to every leaf is
smooth and locally isometric to $X$. Then, one of the following holds.
\begin{enumerate}
\item (Example \ref{hom_ex} with irreducible $\G$) The
Lie group  $G$ is semisimple non-compact,
the fundamental group
 $\pi_{1}M$ is isomorphic to an irreducible uniform
  lattice $\G$ of $\Isom (X) \times G$,
 and $(M,\F)$ is smoothly conjugate to the
  homogeneous Lie foliation on $\G \backslash (X \times G)$ whose lift to $X \times G$ is
  the product foliation parallel to $X$. The leaves of $\F$ are simply-connected.
\item (Example \ref{susp_ex}) The Lie group $G$ admits a compact quotient group $G'$ of the same dimension
such that a finite covering of $(M,\F)$ is smoothly conjugate to a suspension Lie foliation on a $G'$-bundle over a compact manifold locally isometric to $X$. If moreover $X$ is not $\bH_{\RR}^{n}$ nor
 $\bH_{\CC}^{n/2}$, then $G'$ is semisimple.
\end{enumerate}
\end{cor}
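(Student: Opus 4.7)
The plan is to invoke Theorem \ref{main1_thm} together with the structural Proposition \ref{lochom_prop}, specialized to the irreducible case.

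First, I would apply Theorem \ref{main1_thm}. Since $X$ is irreducible it contains no two homothetic irreducible factors, so Remark \ref{no_cover_rms} applies and $(M,\F)$ itself---with no passage to a finite cover---is smoothly conjugate to the homogeneous foliation on $\Gamma\backslash(X\times G)$ for some uniform lattice $\Gamma\subset H\times G$, where $H:=\Isom(X)$. Because $X$ is irreducible of non-compact type, $H$ is a semisimple (in fact almost simple) Lie group with trivial centre, finitely many connected components, and no nontrivial connected compact normal subgroup; the freeness condition \eqref{eq_mfd} is built into Example \ref{hom_ex}. Hence Proposition \ref{lochom_prop} applies and yields a dichotomy.

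Suppose first that conclusion (i) of Proposition \ref{lochom_prop} holds, so $X=X'\times X''$ and a finite cover of $(M,\F)$ is a flat bundle over a locally symmetric space modelled on $X'$ with fibre a homogeneous foliation modelled on $X''$. Irreducibility of $X$ forces $X'=\{*\}$ or $X''=\{*\}$; but $X''=\{*\}$ would make the inner foliation have zero-dimensional leaves, contradicting $\dim X=n\geq 3$. So $X'=\{*\}$: the base of the flat bundle is a point, no actual cover is needed, $\Gamma$ is an irreducible uniform lattice in $\Isom(X)\times G$, $G$ is non-compact semisimple, and the leaves are simply connected. This is case (i) of the corollary.

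Suppose instead that conclusion (ii) of Proposition \ref{lochom_prop} holds. Then a finite cover of $(M,\F)$ is a suspension $G'$-Lie foliation on a $G'$-bundle over the compact locally symmetric space $\Gamma'\backslash H/K$ (whose universal cover is $X$)---this is case (ii) of the corollary. For the final clause, since $X$ is irreducible, $H$ has a unique simple factor (itself, up to local isomorphism), and the hypothesis of the ``moreover'' in Proposition \ref{lochom_prop} reduces to $H$ not being locally isomorphic to any $\PSO(m,1)$ or $\PU(m,1)$; in terms of $X$, with matching real dimension $n=\dim X$, this is precisely the exclusion of $X=\bH^n_\RR$ and $X=\bH^{n/2}_\CC$. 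Under this hypothesis Proposition \ref{lochom_prop} gives $G'$ semisimple. I expect no genuine obstacle: all analytic content sits in Theorem \ref{main1_thm}, all arithmeticity in Proposition \ref{lochom_prop}; the only care needed is the elimination via irreducibility of the nontrivial factor decomposition in case (i), and the correct matching of dimensions for the rank-one exception in case (ii).
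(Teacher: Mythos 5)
Your proof is correct and takes the same route the paper intends: apply Theorem \ref{main1_thm} (with Remark \ref{no_cover_rms} to dispense with the finite cover), then specialise Proposition \ref{lochom_prop} to the case of irreducible $X$. The paper leaves the details implicit, and your two supplementary observations---that irreducibility of $X$ collapses the factorisation $X=X'\times X''$ in case (i) of Proposition \ref{lochom_prop} to $X'=\{*\}$, and that the ``moreover'' hypothesis of that proposition reduces, when $\Isom(X)$ is almost simple, to precisely the exclusion of $\bH^n_\RR$ and $\bH^{n/2}_\CC$---are exactly the right ones to fill them in.
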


In the next, let us explain how to prove Corollary \ref{z_cor}, which is a generalisation of Zimmer's arithmeticity theorem of the holonomy group. Every Lie foliation $(M,\F)$ which satisfies the assumption of Theorem \ref{main1_thm} is homogeneous, in other words, of the form $\G\backslash (X \times G)$ for a uniform lattice $\G$ of $H \times G$, where $H=\Isom X$. By Proposition \ref{lochom_prop}, we can see that, unless $(M,\F)$ is a quotient of a suspension Lie foliation by a finite group and $X$ has an irreducible factor isometric to $\bH_{\RR}^{d}$ or $\bH_{\CC}^{d}$, the adjoint group $\Ad_{G}(G)$ is non-compact and semisimple. Therefore, the arithmeticity of the holonomy group
 $\Ad_{G}(\G)$ in $\Ad_{G}(G)$ makes sense. Since $(\id \times \Ad_{G})(\G)$ is a uniform lattice of $H \times \Ad_{G}(G)$ (Proposition \ref{lochom_prop}), {$\Ad_{G}(\G)$ has a finite index subgroup which is a product of irreducible uniform lattices of either a semisimple Lie group of real rank $\geq 2$, $\Isom(\bH_{\HH}^{d})$ or $\Isom(\bH_{\OO}^{2})$. By using the superrigidity theorems due to Margulis~\cite{Margulis}, Corlette~\cite{Corlette} and Gromov-Schoen~\cite{GS} to follow a well known argument (see \cite[Lemma 9.1]{Zimmer}), we get the arithmeticity of $\Ad_{G}(\G)$ in $\Ad_{G}(G)$, hence Corollary \ref{z_cor}.}
 
Finally let us see how to prove Corollary \ref{codim_cor}, which is a generalisation of \cite[Theorem A-(5)]{Zimmer}. {Since the given foliation is not finitely covered by a suspension foliation, after Theorem \ref{main1_thm} and Proposition \ref{lochom_prop}, there exists a decomposition $X = X' \times X''$ such that $H'' \times \Ad_{G}(G)/C$ admits an irreducible lattice, where $H'' = \Isom(X'')$} and $C$ is the maximal connected normal compact subgroup of $\Ad_{G}(G)$. By Johnson's theorem~\cite{Johnson}, $\Lie(H'' \times \Ad_{G}(G)/C) \otimes \CC$ is a direct sum of mutually isomorphic ideals, which are of dimension $d(X)$. Since the Lie algebra of $\Ad_{G}(G)/C$ contains at least one of these ideals, we obtain
\[\codim (\F) = \dim G = \dim_{\CC} \Lie(\Ad_{G}(G)) \otimes \CC \geq d(X).\]

Apparently, the following example shows that suspension Lie foliations should be excluded from \cite[Theorem A-(5)]{Zimmer}. 

\begin{ex}\label{ex:Z}
Here is a well-known construction coming from Galois conjugation. Let
\begin{align*}
\Gamma & := \SO(x^{2}+y^{2}-\sqrt{2}z^{2}-\sqrt{2}w^{2}; \Z[\sqrt{2}])
\end{align*}
 be a uniform lattice (see e.g., \cite[Section 5.5]{Witte}) in \[H:=\SO(x^{2}+y^{2}-\sqrt{2}z^{2}-\sqrt{2}w^{2}) \cong \SO(2,2).\] 
The automorphism $\sigma$ of the ring $\Z[\sqrt{2}]$ such that
$\sigma(\sqrt{2})=-\sqrt{2}$
 induces a homomorphism from $\Gamma$ into
 \begin{align*}
G & := \SO(x^{2}+y^{2}+\sqrt{2}z^{2}+\sqrt{2}w^{2}) \cong \SO(4)
\end{align*}
  whose image is dense in $G$. Recall that $H$
  has exactly two connected components $H_{\pm} \cong \SO_\pm(2,2)$.
By Selberg's lemma \cite{Selberg}, there exists {in $\Gamma\cap H_+$} a torsion free,
 finite index subgroup $\G'$. Put $V:=\G'\backslash H/K$, where $K$ is a maximal compact subgroup of $H$.
Suspending the quotient map $\pi_1V\to\G'$ composed with $\sigma$, one gets a
   homogeneous $G$-Lie foliation whose leaves are locally isometric to $H/K$.

Let us modify this example to have fibres of smaller dimension. Let $h : \G' \to \SO(3)$ be the composite
\[\G' \to \SO(4) \to \SO(4)/\{\pm I\} \to \SO(3) \times \SO(3) \to \SO(3)\] where the second map is the canonical projection, the third map is the accidental double cover, and the fourth map is the first projection. The discrete subgroup $\G'$ being Zariski dense in $H$,
 the centre of $\G'$ is
contained in the centre of $H$, and consequently, finite.
  Since $\G'$ is torsion-free, it has no centre.
By Margulis' normal subgroup theorem \cite{Margulis}, $h$ is injective.
 We get, by means of $h$, a suspension $\SO(3)$-Lie foliation $\F$, whose leaves
  are simply-connected ($h$ being injective). Every leaf is thus isometric to
   $$\SO(2,2)/\Sr(\Or(2)\times\Or(2))$$
 On the other hand, $$\codim \F = \dim \SO(3) = 3 < 6 = \dim \SO(2,2).$$ 
 Hence, this example does not satisfy the conclusion of \cite[Theorem A-(5)]{Zimmer}.
\end{ex}

The following is an example of homogeneous suspension Lie foliations whose fibers are not semisimple Lie groups. 

\begin{ex}
    Another type of homogeneous suspension $G$-Lie foliation, where $G = S^1 = \R/\Z$,
can be obtained from any closed hyperbolic $n$-manifold $V$ ($n\ge 2$)
whose first Betti number is at least $2$.
The group $\pi_1V$ is a uniform lattice in {$\Isom(\bH_\RR^{n})$}. One
chooses a De Rham cohomology class
 $\omega\in H^{1}(V;\RR)$ not proportional to any
  integral class, and one regards $\omega$
 as a morphism $\pi_1V\to\RR$. After a rescaling, the image of $\omega$ in $\RR$
 contains $\Z$. Hence, $\omega$ induces a morphism  $\rho:\pi_1V\to S^1$
  whose image is dense in $S^1$.  
\end{ex}

\section{Application to Riemannian foliations}

Finally, we apply the rigidity on Lie foliations,  Theorem~\ref{main1_thm}, to show a rigidity result on Riemannian foliations. Recall that, by Molino's theory \cite{Molino}, every minimal Riemannian foliation $(M,\F)$ is obtained as the quotient of a minimal $G$-Lie foliation $(M^{1},\F^{1})$ of the same dimension by a free action of a compact Lie group $S$, where $G$ is called the \emph{structural group} of $(M,\F)$.

\begin{thm}\label{uni_thm}
Let $(M,\F)$ be a connected closed manifold with a minimal Riemannian foliation. Let $G$ be the structural group of $(M,\F)$. Assume that $M$ admits a Riemannian metric of class $C^0$ whose restriction to every leaf of $\F$ is smooth and locally isometric with a symmetric space $X$ of non-compact type with no Poincar\'{e} disk factor. Then, there exist a compact Lie subgroup $S$ of $G$ and a uniform lattice $\G$ of $\Isom (X) \times G$ such that a finite covering of $(M,\F)$ is smoothly conjugate to $(\Gamma \backslash (X \times (G/S)), \F_{0})$, where $\F_{0}$ is the foliation on $\Gamma \backslash (X \times (G/S))$ whose lift to $X \times (G/S)$ is $\sqcup_{g \in G} X \times gS$.
\end{thm}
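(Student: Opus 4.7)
The plan is to bootstrap Theorem~\ref{main1_thm} from $G$-Lie foliations up to Riemannian foliations via Molino's structure theory, and then to descend the resulting homogeneous model through the compact quotient.

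First I would invoke the Molino description cited in the statement to write $(M,\F)=(M^{1},\F^{1})/S$, where $(M^{1},\F^{1})$ is a minimal $G$-Lie foliation of the same leaf dimension and $S$ is a compact Lie group acting freely on $M^{1}$ preserving $\F^{1}$. The projection $\pi:M^{1}\to M$ is then a principal $S$-bundle whose fibres are transverse to $\F^{1}$ (since the leaf dimensions coincide), so each leaf of $\F^{1}$ maps as a local diffeomorphism to a leaf of $\F$. Pulling back the given $C^{0}$ leafwise locally symmetric metric along $\pi$ and complementing it by an $S$-invariant bundle-like transverse metric, I get a $C^{0}$ Riemannian metric on $M^{1}$ whose restriction to every leaf of $\F^{1}$ is smooth and locally isometric to $X$. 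Hence $(M^{1},\F^{1})$ falls within the hypotheses of Theorem~\ref{main1_thm}.

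Theorem~\ref{main1_thm} then provides a finite cover $\widehat{M}^{1}\to M^{1}$ smoothly conjugate to a homogeneous $G$-Lie foliation on $\Gamma\backslash(X\times G)$ with total group $\Isom(X)\times G$, for some uniform lattice $\Gamma\subset\Isom(X)\times G$. A routine cover-chasing argument, using that $\pi_{0}(S)$ is finite, allows me to lift $S$ to a compatible free action on $\widehat{M}^{1}$, and to set $\widehat{M}:=\widehat{M}^{1}/S$ as a corresponding finite cover of $M$.

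The central step is to identify the transported $S$-action on $\Gamma\backslash(X\times G)$ as right translation by a subgroup of $G$. By construction the $S$-action preserves $\F^{1}$ and its transverse Lie parallelism. Diffeomorphisms of a Lie foliation preserving the transverse parallelism act on the leaf space of the lifted foliation, which the developing map identifies with $G$, by right translations of $G$. This yields a continuous homomorphism $\phi:S\to G$, injective because $S$ acts freely and transversely to $\F^{1}$, and with compact image. Identifying $S$ with $\phi(S)\subset G$, the $S$-action on $X\times G$ reads $(x,g)\cdot s=(x,gs)$; it commutes with the left $\Gamma$-action and therefore descends to $\Gamma\backslash(X\times G)$. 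Taking the quotient by $S$ yields
\[
\widehat{M}\;\cong\;\Gamma\backslash(X\times G)/S\;\cong\;\Gamma\backslash\bigl(X\times(G/S)\bigr),
\]
under which $\F$ descends to the foliation $\F_{0}$ whose lift to $X\times(G/S)$ is $\sqcup_{g\in G}X\times gS$, exactly as in the statement. I expect the main obstacle to be this third step: pinning down the $S$-action inside the transverse automorphism group of the Lie foliation and checking, under Molino's canonical identification, that this automorphism group coincides with right translations by the structural group. That verification rests on the naturality of the developing map together with Molino's description of $S$ as the compact part of the fibre of the basic bundle attached to $\F$.
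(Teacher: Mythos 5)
Your proposal follows essentially the same route as the paper: Molino's structure theory to replace $(M,\F)$ by a minimal $G$-Lie foliation $(M^{1},\F^{1})$ with a free $S$-action, a pullback of the leafwise locally symmetric metric, Theorem~\ref{main1_thm} to make $(M^{1},\F^{1})$ homogeneous, and then identification of the $S$-action with right $G$-translations so that the quotient becomes $\Gamma\backslash(X\times(G/S))$. The ``main obstacle'' you correctly single out is handled in the paper by Lemma~\ref{isom_lem}, which computes the identity component of the leafwise isometry group of a homogeneous $G$-Lie foliation as $(G\cap\Gamma)\backslash G$ acting by right translations; your heuristic via the transverse parallelism and the developing map is exactly the content of that lemma's proof.

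One point in your sketch is looser than it should be: the claim that $\phi:S\to G$ is injective ``because $S$ acts freely and transversely to $\F^{1}$.'' Transversality of the $S$-orbits to the leaves (meaning trivial intersection of tangent spaces) only kills the Lie algebra of $\ker\phi$, so it shows $\ker\phi$ is finite; it does not by itself rule out a nontrivial finite subgroup of $S$ that preserves each leaf while still acting freely on $M^{1}$. To close this you need an additional input, e.g.\ the precise computation of Lemma~\ref{isom_lem} combined with effectiveness of the free $S$-action (the map $S\to\Isom_{0}(M^{1},\F^{1})\cong(G\cap\Gamma)\backslash G$ is then injective because the $S$-action on $M^{1}$ is effective), together with a further passage to a finite cover to realize $S$ as an actual subgroup of $G$ rather than of the quotient $(G\cap\Gamma)\backslash G$. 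With that repaired, the argument matches the paper's.
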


Let us outline the proof. 
Consider the principal $S$-bundle $\pi : (M^1,\F^1) \to (M,\F)$ obtained by Molino's theory, where $(M^1,\F^1)$ is a minimal Lie foliation and $S$ is a compact Lie group. By taking a finite covering of $(M,\F)$, we can assume that $S$ is connected. Each leaf of $\F^{1}$ covers a leaf of $\F$ by the restriction of $\pi$. Therefore, if $(M,\F)$ admits a leafwise locally symmetric Riemannian metric, then $(M^{1},\F^{1})$ admits such leafwise metric that is invariant under the principal $S$-action. Thus Theorem \ref{uni_thm} follows from Theorem \ref{main1_thm} and the following description of the leafwise isometry groups of homogeneous Lie foliations.

\begin{lem}\label{isom_lem}
Let us use the notation of Example \ref{hom_ex}. Let $(M,\F)$ be the homogeneous Lie foliation considered there, namely, $M = \Gamma \backslash (X \times G)$ and $\F$ is the foliation on $M$ whose lift to $X \times G$ is $\sqcup_{g \in G} X \times \{g\}$. Assume that $\F$ is minimal and $G$ is simply-connected. Let $\Isom (M,\F)$ be the group of diffeomorphisms which map each leaf of $\F$ to a leaf of $\F$ preserving the leafwise metric. Its identity component $\Isom_{0}(M,\F)$ is given by
\begin{equation}
\Isom_{0}(M,\F) = (G \cap \Gamma) \backslash G,
\end{equation}
where $G \cap \Gamma$ is normal in $G$ and $g_{1} \in (G \cap \Gamma) \backslash G$ acts on $M$ by $g_{1} \cdot (\Gamma(x, g)) = \Gamma (x, gg^{-1}_{1})$. 
\end{lem}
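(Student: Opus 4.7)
The plan is to construct a surjective Lie-group homomorphism $\Xi:G\to\Isom_0(M,\F)$, with discrete kernel $\Gamma\cap G$, given by the right translation
\[\Xi(g_1)\bigl(\Gamma\cdot(x,g)\bigr)=\Gamma\cdot(x,gg_1^{-1}).\]
Well-definedness and the inclusion $\Xi(G)\subset\Isom_0(M,\F)$ are immediate: the left $\Gamma$-action on $X\times G$ and right translation in the second factor commute, each $\Xi(g_1)$ preserves $\F$ and restricts to the identity in the $X$ direction on every leaf, and $\Xi$ is continuous on the connected group $G$. To compute the kernel, suppose $\Xi(g_1)=\id_M$; then for every $(x,g)\in X\times G$ the freeness of the $\Gamma$-action singles out a unique $\gamma(x,g)\in\Gamma$ with $\gamma(x,g)\cdot(x,g)=(x,gg_1^{-1})$, and this assignment is locally constant, hence constant, by discreteness of $\Gamma$ and connectedness of $X\times G$. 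Writing the common value $\gamma_0=(h_0,h_0')$, we get $h_0\cdot x=x$ for every $x\in X$ and $h_0'g=gg_1^{-1}$ for every $g\in G$; effectivity of $H=\Isom(X)$ on $X$ yields $h_0=e$, whence $h_0'=g_1^{-1}$ and $g_1\in Z(G)$. Combined with the centrality of $\Gamma\cap G$ in $G$ (discrete and normalised by the dense subgroup $p_G(\Gamma)$, as recalled at the start of the proof of Proposition~\ref{lochom_prop}), this gives $\ker\Xi=\Gamma\cap G$.

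Since $\Xi$ has discrete kernel, it remains to prove that $\Xi_*:\Lie(G)\to\Lie(\Isom_0(M,\F))$ is surjective. Take any leafwise Killing, $\F$-preserving smooth vector field $V$ on $M$, and lift it to a $\Gamma$-invariant vector field $\widetilde V$ on $M^1=X\times G$ with the same properties. Its transverse component descends to a continuous vector field $V_G$ on $G$ that is invariant under left translation by the dense subgroup $p_G(\Gamma)$, hence left $G$-invariant, so $V_G(g)=L_{g*}v$ for some $v\in\Lie(G)$. Since the infinitesimal generator of the $\Xi$-action of $\exp(tv)$ projects to $-L_{g*}v$ on $G$, replacing $V$ by $V-\Xi_*(-v)$ we may assume $\widetilde V$ is tangent to $\F^1$.

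Such a $\widetilde V$ is then encoded by a continuous map $g\mapsto U_g\in\Lie(H)$, namely the Killing field of $X$ determining $\widetilde V$ on the leaf $X\times\{g\}$, and the $\Gamma$-invariance becomes $U_{\gamma_Gg}=\Ad(\gamma_H)U_g$ for every $(\gamma_H,\gamma_G)\in\Gamma$. The next step is to introduce
\[W:H\times G\to\Lie(H),\qquad W(h,g):=\Ad(h^{-1})U_g,\]
check that it is $\Gamma$-invariant (a direct computation using the $\Ad$-cocycle relation above), and conclude from this that $W$ descends to the compact quotient $\Gamma\backslash(H\times G)$ and is therefore bounded. For each fixed $g$ this forces the adjoint orbit $\{\Ad(h^{-1})U_g:h\in H\}$ to be bounded in $\Lie(H)$. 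The principal obstacle I anticipate is invoking cleanly the fact that in a semisimple Lie group without compact factors — here $H=\Isom(X)$, which is semisimple with no compact factor because $X$ is of non-compact type without compact factor — the only bounded $\Ad(H)$-orbit in $\Lie(H)$ is $\{0\}$; the shortest justification uses the restricted-root decomposition relative to a maximal $\R$-split torus $A\subset H$, from which one reads off that any nonzero $\xi\in\Lie(H)$ has a component along some restricted root space or along $\Lie(A)$, so that $\Ad(\exp(ta))\xi$ becomes unbounded for a suitable $a\in\Lie(A)$. Granting this, $U_g=0$ for every $g$, so $\widetilde V=0$, $\Xi_*$ is surjective, and the lemma follows.
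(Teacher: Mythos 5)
Your proof is correct but takes a genuinely different route from the paper's. The paper deduces the nontrivial inclusion $\Isom_{0}(M,\F)\subset(\Gamma\cap G)\backslash G$ by citing a structure theorem (via the action of $\Isom(M,\F)$ on Molino's transverse Lie algebra, with references to Haefliger and to Nozawa): any lift to $X\times G$ of $f\in\Isom(M,\F)$ has the form $(x,g)\mapsto(h_{1}x,\alpha(g)g_{1})$ with $h_{1}\in\Isom(X)$, $\alpha\in\Aut(G)$, $g_{1}\in G$, and the inclusion then ``easily follows''. You instead argue infinitesimally and self-containedly: after killing the transverse component of a leafwise Killing $\F$-preserving field (which is left-$G$-invariant because it is $p_{G}(\Gamma)$-invariant and $p_{G}(\Gamma)$ is dense), the remaining leafwise field is encoded by a continuous equivariant map $g\mapsto U_{g}\in\Lie(H)$, and the $\Gamma$-invariant map $W(h,g)=\Ad(h^{-1})U_{g}$ is bounded on the compact quotient $\Gamma\backslash(H\times G)$, which forces $U\equiv 0$ since $\Lie(H)$ is semisimple without compact factor and therefore has no nonzero bounded adjoint orbit. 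Your kernel computation is also what the paper leaves to the reader. The pay-off of your approach is avoiding the cited structure result; the cost is a longer computation. Two points to state explicitly rather than leave implicit: surjectivity of $\Xi_{*}$ together with connectedness of $\Isom_{0}(M,\F)$ gives $\Xi(G)=\Isom_{0}(M,\F)$ (an open subgroup is closed); and both your argument and the paper's tacitly treat $\Isom(M,\F)$ as a Lie group acting smoothly on $M$.
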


\begin{proof}
Only the inclusion $\Isom_{0}(M,\F) \subset (G \cap \Gamma) \backslash G$ may not seem immediate. In order to verify it, recall that, since $(M,\F)$ is a minimal Lie foliation,
    the differential map of any $f \in \Isom(M,\F)$ induces an automorphism
     of the Lie algebra of
the \emph{transverse}
 vector fields on $(M,\F)$ (in the sense of Molino \cite{Molino}), which is $\Lie (G)$.
  In view of this fact, it is not difficult to see that, for any lift $\tilde{f} \in \Diff (X \times G)$ of $f$, there exist $h_{1} \in \Isom (X)$, $g_{1} \in G$ and $\alpha \in \Aut (G)$ such that
\[
\wt{f}(x, g) = (h_{1}x, \alpha(g) g_{1}), \quad\quad \forall x \in X, \forall g \in G
\]
(see Haefliger \cite{Haefliger}, see also \cite[Proposition 6]{Nozawa}). The inclusion easily follow.
\end{proof}

\section{Problems}\label{prob_sec}

With this work, we hope to exemplify the use, for Lie foliations, of rigidity theory; and also to open and motivate some fundamental problems in the classification of Lie foliations. 
(In these problems, every Lie foliation is
 understood to be a minimal $G$-Lie foliation on a closed manifold; and $X$
is understood to be a product of irreducible Riemannian symmetric spaces of non-compact type).

\begin{prob}\label{que:1}
Classify Lie foliations of leafwise dimension two. In particular, if the leaves
of a Lie foliation are \emph{isometric to the Poincar\'e disk,} is this foliation necessarily homogeneous?
\end{prob} 
This last question, of particular interest for the theory of Riemannian
foliations, is open
 even for $G=\PSL(2;\RR)$.
 
Remarkably few is known in the direction of Problem \ref{que:1}. Matsumoto-Tsuchiya
proved that for  $G=\GA(1,\R)$, the real $2$-dimensional nonabelian
solvable group, every $G$-Lie foliation (minimal or not)
 on a closed $4$-manifold is homogeneous \cite{MT}. The already cited example built in \cite{HMM}, where
$G=\PSL(2;\RR)$ and the leaves are the $2$-sphere minus a Cantor set,
is not homogeneous;  the holonomy group $\Gamma$ is not a cocompact lattice in any real Lie group; but there
is nevertheless a kind of rigidity:  $\Gamma$ is a cocompact lattice in the
product of $G$ with the diadic Lie group $\PSL(2;\Q_2)$.

\medbreak
\begin{prob}
Assume that $X$ has no factor $\bH^2_{\RR}$, and 
that $\F$ is a Lie foliation in which the
 universal cover of every leaf
 is \emph{bi-Lipschitz diffeomorphic} to $X$. Does it follow that $\F$ is homogeneous?
\end{prob}

\medbreak
\begin{prob} Would Theorem \ref{main2_thm} still hold if one allowed
$X$ to have some
 factors $\bH^d_{\RR}$ ($d\ge 3$) or $\bH^d_{\CC}$ ($d\ge 2$)?
\end{prob}

\begin{prob}
Which $3$-manifolds can be realized as the leaf of a Lie foliation?
\end{prob}

Schweitzer-Souza~\cite{SS} found a necessary condition for
a non-compact Riemannian manifold to be a leaf of a Riemannian foliation on a closed manifold.

\begin{prob}\label{que:2} 
Is there a geometrisation theorem for Lie foliations of dimension $3$?
\end{prob}

Such a foliated version of Perelman's geometrisation theorem would split every $3$-dimensional Lie foliation, along some transverse closed hypersurfaces,
into pieces, in each of which the leaves
 would admit one of Thurston's eight geometries.

\begin{prob}\label{que:3} 
Classify $3$-dimensional Lie foliations whose leaves admit one of Thurs\-ton's eight geometries.
\end{prob}

The present paper solves the hyperbolic case. This problem enters the more general program, proposed by Paul Schweitzer,
 to study $3$-dimensional foliations with geometric leaves. For example, it is not known if there is a non-homogeneous Lie foliation $\F$ whose leaves are isometric to the $3$-dimensional solvable Lie group $\operatorname{Sol}$. (Recall that after Carri\`ere \cite{Carriere}, $G$ would necessarily also be solvable).

{Ghys \cite{Ghys} pointed out that an equicontinuous foliated space is a topological version of Riemannian foliations. The Molino theory for such spaces was estalished by the works of
 Alvarez-L\'opez, Barral-Lij\'o, Candel, Clark, Dyer,
 Hurder, Lukina, Moreira Galicia
 \cite{Alvarez09,Alvarez16,Alvarez19,Dyer17}.
A foliated space can be transversely modelled on the Cantor space, and it can be regarded as a model of minimal sets of foliations.}
\begin{prob}
    Study the rigidity of equicontinuous foliated spaces with locally symmetric leaves.
\end{prob}


\begin{thebibliography}{99}
\bibitem[Alv09]{Alvarez09} J. Alvarez L\'opez, A. Candel,
Equicontinuous foliated spaces,
\textit{Math. Z.} \textbf{263} (4) (2009), 725--774.

\bibitem[Alv16]{Alvarez16}J. Alvarez L\'opez, M. Moreira Galicia,
Topological Molino's theory,
\textit{Pacific J. Math.} \textbf{280} (2) (2016), 257--314.

\bibitem[Alv19]{Alvarez19} J. Alvarez L\'opez, R. Barral Lij\'o,
Molino's description and foliated homogeneity,
\textit{Topology Appl.} \textbf{260} (2019), 148–177.



\bibitem[Au63]{Auslander}
L.\ Auslander,
On radicals of discrete subgroups of Lie groups,
\textit{Amer.\ J.\ Math.}
\textbf{85} (1963), 
145--150.

\bibitem[BCG96]{BCG}
G.\ Besson, G.\ Courtois and S.\ Gallot, 
Minimal entropy and Mostow's rigidity theorems, 
\textit{Ergod.\ Th.\ \& Dynam.\ Sys.}
\textbf{16} (1996), 
623--649.

\bibitem[Ca93]{Candel}
A.\ Candel, 
Uniformization of surface laminations, 
\textit{Ann.\ Sci.\ Ecole Norm.\ Sup.\ (4)} 
\textbf{26} (1993), 
489--516.

\bibitem[Ca88]{Carriere}
Y.\ Carri\`{e}re, 
Feuilletages riemanniens \`{a} croissance polynomiale, 
\textit{Comment.\ Math.\ Helv.}
\textbf{63} (1988), 
1--20. 

\bibitem[Co92]{Corlette} 
K.\ Corlette, 
Archimedean superrigidity and hyperbolic geometry, 
\textit{Ann.\ of Math.\ (2)} 
\textbf{135} (1992), 
165--182.

\bibitem[De78]{Deligne} 
P.\ Deligne, 
Extensions centrales non-residuellement finies de groupes arithmetiques, 
\textit{C.R.\ Acad.\ Sci.\ Paris.\ Serie A} \textbf{287} (1978), 203--208.

\bibitem[Dy17]{Dyer17}
J. Dyer, S. Hurder, O. Lukina,
Molino theory for matchbox manifolds,
\textit{Pacific J. Math.} \textbf{289} (1) (2017), 91--151.


\bibitem[Fe71]{Fedida}
E. Fedida, Sur les feuilletages de Lie, \textit{C.R. Acad. Sci. Paris, Serie A} \textbf{272} (1971), 999--1001. 

\bibitem[Gh88]{Ghys}
\'{E}.\ Ghys, 
Appendix E of \textit{Riemannian foliations by P. Molino}, 
Progr.\ Math.\ 
\textbf{78}, 
Birkh\"{a}user,
Boston-Basel, 
1988. 

\bibitem[GS92]{GS}
M.\ Gromov and R.\ Schoen, 
Harmonic maps into singular spaces and $p$-adic superrigidity for lattices in groups of rank one, 
\textit{Inst.\ Hautes Etudes Sci.\ Publ.\ Math.} 
\textbf{76} (1992), 
165--246.

\bibitem[Ha84]{Haefliger}
 A. Haefliger, {Groupo\"ide d'holonomie et classifiants},
  in {\it Structure transverse des feuilletages,} Toulouse (1982); ed. J. Pradines, Ast\'erisque \textbf{116} (1984), 70--97.


\bibitem[HMM05]{HMM}
G.\ Hector, S.\ Matsumoto and G.\ Meigniez, 
Ends of leaves of Lie foliations, 
\textit{J.\ Math.\ Soc.\ Japan} 
\textbf{57} (2005), 
753--779. 

\bibitem[KKL98]{KKL}
M.\ Kapovich, B.\ Kleiner and B.\ Leeb, 
On quasi-isometry invariance of de Rham decomposition of nonpositively curved Riemannian manifolds,
\textit{Topology} 
\textbf{37} (1998), 
1193--1212.

\bibitem[KL97]{KL}
B.\ Kleiner and B.\ Leeb, 
Rigidity of quasi-isometries for symmetric spaces and Euclidean buildings, 
\textit{Publ.\ Math.\ Inst.\ Hautes \'{E}tudes Sci.} 
\textbf{86} (1997), 
115--197. 

\bibitem[Jo84]{Johnson}
F.E.A.\ Johnson,
On the existence of irreducible lattices, 
\textit{Arch.\ Math.\ (Basel)} 
\textbf{43} (1984), 
391--396. 

\bibitem[Ma91]{Margulis}
G.A.\ Margulis, 
\textit{Discrete subgroups of semisimple Lie groups}, 
Ergeb.\ Math.\ Grenzgeb.\ (3), 
\textbf{17},  
Springer-Verlag, 
Berlin, 
1991. 

\bibitem[MT92]{MT}
S.\ Matsumoto and N.\ Tsuchiya, 
The Lie affine foliations on $4$-manifolds, 
\textit{Invent.\ Math.} 
\textbf{109} (1992), 
1--16. 
 \bibitem[Me95]{Meigniez1}
         G. Meigniez, 
         Feuilletages de Lie r\'{e}solubles, 
         \textit{Ann.\ Fac.\ Sci.\ Toulouse Math.\ (6)} 
         \textbf{4} (1995), 
          801--817. 

\bibitem[Me97]{Meigniez2} G. Meigniez, {\it Holonomy groups of solvable
Lie foliations}, in {\it Integrable systems and Foliations,} Montpellier (1995); ed. C. Albert et al., Birkhauser (1997), 107--146.

\bibitem[Mo88]{Molino}
P.\ Molino, 
\textit{Riemannian Foliations}, 
Progr.\ Math.\ 
\textbf{73}, 
Birkh\"{a}user, 
Boston, MA, 
1988. 

\bibitem[No10]{Nozawa}
H.\ Nozawa, 
Minimizability of developable Riemannian foliations, 
\textit{Ann.\ Global Anal.\ Geom.}
\textbf{38} (2010),
119--138.

\bibitem[Oc69]{Ochiai} 
T.\ Ochiai,
Transformation groups on Riemannian symmetric spaces.
\textit{J.\ Diff.\ Geom.} \textbf{3} (1969), 231--236.

\bibitem[Pa89]{Pansu}
P.\ Pansu, 
M\'{e}triques de Carnot-Carath\'{e}odory et quasiisom\'{e}tries des espaces sym\'{e}triques de rang un, 
\textit{Ann.\ of Math.\ (2)} 
\textbf{129} (1989), 
1--60. 


\bibitem[PZ]{PZ}
P.\ Pansu, R. Zimmer, Rigidity of locally homogeneous metrics
of negative curvature on the leaves of a foliation,
\textit{Israel Journal of Mathematics} 
\textbf{68 (1)} (1989), 
56--62. 

\bibitem[Qu08]{Quiroga}
R.\ Quiroga-Barranco. 
Arithmeticity of Totally Geodesic Lie Foliations with Locally Symmetric Leaves. 
\textit{Asian J.\ Math.} \textbf{12} (3) (2008), 289--298.

\bibitem[Ra65]{Raghunathan}
M.S.\ Raghunathan, 
On the first cohomology of discrete subgroups of semisimple Lie groups, 
\textit{Amer.\ J.\ Math.} \textbf{87} (1965), 103--139.

\bibitem[Re59]{Reinhart}
B.L.\ Reinhart, 
Foliated manifolds with bundle-like metrics, 
\textit{Ann.\ of Math.\ (2)} 
\textbf{69} (1959), 
119--132. 

\bibitem[SS17]{SS}
P.\ Schweitzer and F.S.\ Souza,
Non-leaves of foliated spaces with transversal structure,
\textit{Differential Geom.\ Appl.}
\textbf{51} (2017), 
109--111.

\bibitem[Se60]{Selberg}
A.\ Selberg, 
On discontinuous groups in higher-dimensional symmetric spaces.  
\textit{Contributions to function theory} (Internat.\ Colloq.\ Function Theory, Bombay, 1960) pp.\ 147--164, Tata Institute of Fundamental Research, Bombay.

\bibitem[Sh13]{Shchur}
V. Shchur,
A quantitative version of the Morse lemma and quasi-isometries fixing the ideal boundary,
\textit{J.\ Funct.\ Anal.} \textbf{264} (3) (2013), 815--836.

\bibitem[Th74]{Thurston}
 W. Thurston, {The theory of foliations of codimension greater than one,}
\textit{Comment.\ Math.\ Helv.} \textbf{49} (1974), 214--231.

\bibitem[Wi15]{Witte} 
D.M.\ Witte. \textit{Introduction to arithmetic groups}. Deductive Press, 2015.

\bibitem[Zi88]{Zimmer}
R.\ Zimmer, 
Arithmeticity of Holonomy Groups of Lie Foliations, 
\textit{J.\ Amer.\ Math.\ Soc.} 
\textbf{1} (1988), 
35--58. 



\end{thebibliography}
\end{document}